\documentclass{article}

\usepackage[a4paper,top=2.5cm,bottom=2.5cm,left=2cm,right=2cm,%
includehead,includefoot]{geometry}

\usepackage{amsmath,amssymb,amsfonts,amsthm}
\usepackage{mathtools}
\usepackage{amsbsy}
\numberwithin{equation}{section}
\allowdisplaybreaks

\usepackage{graphicx}
\usepackage{float}
\usepackage{xcolor}
\usepackage{booktabs}
\usepackage{colortbl}
\usepackage{caption}
\usepackage{subcaption}
\usepackage{siunitx}
\usepackage{multirow}
\usepackage{epstopdf}

\usepackage{enumitem}
\usepackage{algorithm}
\usepackage{algpseudocode}
\usepackage{array}
\usepackage{longtable}

\usepackage{hyperref}
\hypersetup{colorlinks,citecolor=blue,linkcolor=blue,breaklinks=true}

\usepackage{fancyhdr}
\usepackage{authblk}

\restylefloat{algorithm}
\floatstyle{plain}

\newtheorem{theorem}{Theorem}[section]
\newtheorem{lemma}{Lemma}[section]

\newtheorem{proposition}[theorem]{Proposition}

\newtheorem{remark}{Remark}

\usepackage{indentfirst}

\begin{document}
	
	\title{A Column-Wise Reflection Method with Safeguarded Anderson Acceleration for the Continuous Sylvester Matrix Equation}

	\author[1]{Yinglian Jin}
	\author[1]{Hailong Zhu \thanks{This work was supported by the 2024 Anhui Provincial Scientific Research Plan Project (Key Project, No. 2024AH050013),  Corresponding author: Hailong Zhu (email: hai-long-zhu@163.com).}}
	\author[1]{Wenyue Feng}
	\affil[1]{School of Mathematics and Statistics, Anhui University of Finance and Economics, Bengbu 233030, China}
		
	
	\date{}
	\maketitle
	
	\begin{abstract}
		We propose the safeguarded Anderson-accelerated reflection (SAAR) method, a two-level iterative method for the continuous Sylvester matrix equation $AX+XB=C$. The core of the method is a fixed-point reformulation: at each outer iteration, the associated linear matrix equation is decomposed into independent column-wise linear systems, which are solved approximately by a reflection iteration. For nonsingular \(A\) and \(n\ge 2\), we prove the convergence of the reflection inner iteration by a spectral-radius argument and extend the result to the matrix equation setting. Since only finitely many inner iterations are used in practice, the overall scheme is analyzed as an inexact fixed-point iteration, and a sufficient condition for linear convergence of the unaccelerated inexact iteration is established in the Frobenius norm. On top of this scheme, SAAR incorporates Anderson acceleration into the outer iteration, while a residual-based safeguard rejects Anderson candidates whose Sylvester residual is larger than that of the standard inexact reflection iterate, thereby enhancing numerical stability. Numerical experiments indicate that SAAR substantially reduces the number of outer iterations and often the CPU time, and it improves empirical robustness with respect to the inner tolerance for the tested problems.
	\end{abstract}
	
	\noindent\textbf{Keywords:}
	Sylvester equation; matrix equation; fixed-point iteration; reflection method; Anderson acceleration; safeguarded Anderson-accelerated reflection method; inexact iteration.
	
	\noindent\textbf{MSC 2020:} 65F45, 15A24.
	
	\section{Introduction}
	
	We consider iterative solutions of the continuous Sylvester equations of the form
	\begin{equation}
		AX+XB=C,
		\label{eq:1.1}
	\end{equation}
	where \(A,B,C\in \mathbb{R}^{n\times n}\) are given matrices and \(X\in \mathbb{R}^{n\times n}\) is the unknown matrix. The continuous Sylvester equation \eqref{eq:1.1} has a unique solution if and only if \(A\)
	and \(-B\) have no common eigenvalues, which will be assumed throughout this paper. In addition to the unique solvability of the Sylvester equation, we assume throughout the analysis and algorithm that \(A\) is nonsingular, since the proposed fixed-point formulation and the reflection-based inner iteration require \(A^{-1}\) to exist.
	The continuous Sylvester equation has numerous applications in control and system theory \cite{Datta2003}, stability of linear systems \cite{HalanayRasvan1993}, signal processing \cite{Anderson1986}, image processing \cite{Bouhamidi2007}, and has attracted considerable attention in the study of matrix equations \cite{Levenberg1993, LiMeiKong2016, BaoLinWei2006, BaiGolubNg2008, KhorsandZak2014}.
	
	Standard methods for the numerical solution of the Sylvester equation\eqref{eq:1.1} include the Bartels-Stewart method \cite{BartelsStewart1972} and the Hessenberg-Schur methods \cite{Quintana1995}, which transform \(A\)
	and \(B\) into triangular or Hessenberg form by an orthogonal similarity transformation and then solve the resulting triangular or quasi-triangular matrix equation by back substitution. These methods are classified
	as direct methods and are applicable and effective for general Sylvester equations of reasonably
	small size.
	When the matrices \(A\) and \(B\) are large and sparse, iterative methods such as the alternating direction implicit (ADI) method \cite{Levenberg1993}, the Smith's
	method \cite{LiMeiKong2016}, the Krylov subspace based algorithms \cite{BaoLinWei2006, ElGuennouniJbilouSadok2003, SalkuyehToutounian2006}, the Hermitian and skew-Hermitian splitting (HSS) method \cite{Bai2011}, the inexact
	variant of HSS (IHSS) iteration method \cite{BaiGolubNg2008}, and nested splitting
	conjugate gradient (NSCG) method \cite{KhorsandZak2014} are often the methods of choice for efficiently and
	accurately solving the Sylvester equation \eqref{eq:1.1}.
	
	The reflection algorithm \cite{GuidaSbordone2023} originates from Cimmino's
	iterative method \cite{Benzi2022}. In \cite{FengZhu2025, GuidaSbordone2023}, the authors presented the reflection method and the orthogonal reflection method for linear systems for nonsingular linear systems. Guida and Sbordone \cite{GuidaSbordone2023} mainly prove the convergence of the reflection method in two-dimensional space, and Feng and Zhu \cite{FengZhu2025}, following the theory of \cite{GuidaSbordone2023}, extend the two-dimensional space to \(n\)-dimensional space and propose the orthogonal reflection method. These methods
	start with an arbitrary point in \(\mathbb{R}^{n}\) that gets reflected with respect to the system's hyperplanes. The centroid of the ensuing collection of points becomes the starting point of the next iteration. The reflection method can also be incorporated into a fixed-point framework for the continuous Sylvester equation. Directly applying vectorization to the Sylvester equation \eqref{eq:1.1} leads to a Kronecker-product linear system of dimension \(n^2 \times n^2\), which is computationally prohibitive for large-scale problems. To mitigate this issue, D. Khojasteh Salkuyeh and F. Toutounian \cite{SalkuyehToutounian2006} developed a fixed-point iteration approach that makes effective use of state-of-the-art solvers for block linear systems. Following this philosophy, we decompose the original equation into \(n\) column-wise linear subsystems and employ the reflection method to solve them.
	
	In this paper, we propose the safeguarded Anderson-accelerated reflection (SAAR) method, a two-level iterative method for solving the Sylvester matrix equation. The method is based on a fixed-point formulation, which generates a sequence of matrix equations. At each outer iteration, the corresponding matrix equation is decomposed into \(n\) independent column-wise linear sub-problems, which are solved approximately by the reflection method. Thus, the fixed-point iteration serves as the outer iteration, while the reflection method is employed as the inner iteration. Since the inner problems are solved only approximately, the resulting scheme can be interpreted as an inexact fixed-point iteration. To further improve the convergence behavior of the outer iteration, Anderson acceleration is incorporated, together with a safeguard strategy to enhance numerical stability. The convergence of the unaccelerated inexact fixed-point iteration is analyzed under suitable assumptions, and numerical experiments are presented to demonstrate its effectiveness. Throughout the convergence analysis of the reflection inner iteration, we assume \(n \ge 2\). The scalar case \(n = 1\) is excluded because the reflection iteration may fail to converge, as shown in Remark~\ref{rem:n=1}.
	
	The organization of this paper is as follows. Section~2 contains brief preliminaries. Section~3 presents the reflection method for the continuous Sylvester equations; Section~4 presents the convergence analysis of the unaccelerated inexact iteration; Section~5 describes the safeguarded Anderson acceleration strategy; Section~6 is devoted to numerical experiments; Finally, conclusions are given in Section~7.
	
	\section{Preliminaries and Notation}
	\label{sec:preliminaries}
	
	We consider the continuous Sylvester matrix equation
	\begin{equation}
		AX+XB=C,
		\label{eq:sylvester-prelim}
	\end{equation}
	where \(A,B,C\in\mathbb{R}^{n\times n}\) are given matrices and
	\(X\in\mathbb{R}^{n\times n}\) is the unknown matrix.
	
	\subsection{Fixed-point formulation}
	
	For later use, we summarize the notation associated with the two-level
	iteration:
	
	\begin{itemize}
		\item \(k\): the index of the outer fixed-point iteration, \(k=0,1,2,\ldots\);
		\item \(\nu\): the index of the inner reflection iteration, \(\nu=0,1,2,\ldots\);
		\item \(X^{(k)}\): the accepted matrix iterate after the \(k\)-th outer iteration;
		\item \(\boldsymbol{x}_j^{(k)}\): the \(j\)-th column of \(X^{(k)}\);
		\item \(D^{(k)}:=C-X^{(k)}B\): the right-hand-side matrix of the inner problem at outer iteration \(k\);
		\item \(X^{(k+1,\nu)}\): the matrix approximation obtained after $\nu$-th inner reflection iterations for solving the matrix equation \eqref{eq:outer_matrix_equation} at the $k$-th outer iteration;
		\item \(\boldsymbol{x}_j^{(k+1,\nu)}\): the \(j\)-th column of \(X^{(k+1,\nu)}\);
		\item \(X^{(k+1,0)}\): the initial guess for the inner iteration at the $k$-th outer iteration;
		\item \(X^{(k+1,\ast)}\): the exact solution of the inner matrix equation;
		\item \(X_{\mathrm{inex}}^{(k+1)}\): the approximate solution of the inner matrix equation obtained via finite inner iterations;
		\item \(X_{\mathrm{ex}}^{(k+1)}:=X^{(k+1,\ast)}\): the exact fixed-point update;
		\item \(X_{\mathrm{AA}}^{(k+1)}\): the Anderson-accelerated candidate;
		\item \(G^{(k)}\): the residual of outer fixed point at the $k$-th outer iteration for Anderson acceleration;
		\item \(R_{\mathrm{Syl}}^{(k)}\): the residual of Sylvester equation at the $k$-th iteration;
		\item \(R_{\mathrm{in}}^{(k+1,0)}\): the initial inner residual at the \(k\)-th outer iteration;
		\item \(R_{\mathrm{in}}^{(k+1,\nu)}\): the inner iteration residual at the \(\nu\)-th step at the $k$-th outer iteration;
		\item \(R_{\mathrm{in}}^{(k+1)}\): the final inner residual after the inner iteration stops at the $k$-th outer iteration.
	\end{itemize}
	
	Given an initial matrix \(X^{(0)}\in\mathbb{R}^{n\times n}\), equation
	\eqref{eq:sylvester-prelim} can be reformulated as the following
	fixed-point iteration:
	\begin{equation}
		AX^{(k+1)} = C-X^{(k)}B =:D^{(k)},
		\qquad k=0,1,2,\ldots.
		\label{eq:fixed-point-linear}
	\end{equation}
	Equivalently,
	\begin{equation}
		X_{\mathrm{ex}}^{(k+1)} = \mathcal{T}\bigl(X^{(k)}\bigr),
		\qquad
		\mathcal{T}(X):=A^{-1}(C-XB).
		\label{eq:fixed-point-map}
	\end{equation}
	
	Let \(X^\ast\) denote the exact solution of \eqref{eq:sylvester-prelim}. Then
	\[
	X^\ast=\mathcal{T}(X^\ast).
	\]
	
	The following result gives a sufficient condition for the convergence of the fixed-point iteration.
	
	\begin{theorem}[{\cite{SalkuyehToutounian2006}: sufficient condition for the convergence of the fixed-point iterative method}]
		\label{thm:fixed-point-convergence}
		Let \(A,B,C\in\mathbb{R}^{n\times n}\), and let \(\|\cdot\|\) be an induced norm. If
		\[
		\|A^{-1}\|\,\|B\|<1,
		\]
		then equation \eqref{eq:sylvester-prelim} has a unique solution, and the sequence \(\{X^{(k)}\}_{k\geq 0}\) generated by \eqref{eq:fixed-point-linear} converges to \(X^\ast\).
	\end{theorem}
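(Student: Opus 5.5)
The plan is a contraction-mapping argument for the affine map \(\mathcal{T}(X)=A^{-1}(C-XB)\) from \eqref{eq:fixed-point-map}, posed on \(\mathbb{R}^{n\times n}\) equipped with the induced norm \(\|\cdot\|\). Induced norms are submultiplicative, so for any \(X,Y\in\mathbb{R}^{n\times n}\)
\[
\|\mathcal{T}(X)-\mathcal{T}(Y)\| = \|A^{-1}(Y-X)B\| \le \|A^{-1}\|\,\|X-Y\|\,\|B\| = q\,\|X-Y\|,
\qquad q:=\|A^{-1}\|\,\|B\|<1 .
\]
Hence \(\mathcal{T}\) is a strict contraction on the complete space \((\mathbb{R}^{n\times n},\|\cdot\|)\). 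The Banach fixed-point theorem then gives a unique fixed point \(X^\ast\), together with convergence of \(X^{(k+1)}=\mathcal{T}(X^{(k)})\) to \(X^\ast\) from any \(X^{(0)}\). Concretely, \(\|X^{(k)}-X^\ast\|\le q^k\|X^{(0)}-X^\ast\|\).

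It remains to identify fixed points of \(\mathcal{T}\) with solutions of \eqref{eq:sylvester-prelim}. Since \(A\) is nonsingular, \(X=\mathcal{T}(X)\) holds if and only if \(AX=C-XB\), that is, \(AX+XB=C\). Therefore existence and uniqueness of the fixed point are the same as existence and uniqueness of the Sylvester solution, and the iterates of \eqref{eq:fixed-point-linear} converge to it.

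The one point that needs care is that the hypothesis must supply nonsingularity of \(A\) implicitly through \(\|A^{-1}\|\); we take the standing assumption of the paper that \(A\) is invertible. As an optional consistency check, I would also observe the following. Every eigenvalue \(\mu\) of \(B\) satisfies \(|\mu|\le\|B\|\), and every eigenvalue \(\lambda\) of \(A\) satisfies \(|\lambda|\ge 1/\|A^{-1}\|\). Consequently \(|\mu|\le\|B\|<1/\|A^{-1}\|\le|\lambda|\), so \(\lambda\ne-\mu\), and \(A\) and \(-B\) share no eigenvalue. This recovers uniqueness independently of the fixed-point argument. There is no real obstacle; the argument is a direct application of submultiplicativity followed by Banach's theorem.
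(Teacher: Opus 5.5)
Your proof is correct. The paper gives no proof of its own here: it cites Salkuyeh and Toutounian and remarks that the result is the exact-solve ($\eta\to 0$) case of Theorem~\ref{thm:main-convergence}. That proof is the error recursion $X^{(k+1)}-X^\ast=-A^{-1}(X^{(k)}-X^\ast)B$, bounded in the Frobenius norm by $\|A^{-1}\|_2\|B\|_2$.

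Your contraction estimate is the same computation, but your route gives two things that the reduction does not.

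First, you work directly in the arbitrary induced norm of the statement. Setting $\eta=0$ in \eqref{eq:qdef} only recovers the hypothesis for the spectral norm, and, for example, $\|A^{-1}\|_1\|B\|_1<1$ does not imply $\|A^{-1}\|_2\|B\|_2<1$.

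Second, Theorem~\ref{thm:main-convergence} presupposes that $X^\ast$ exists and is unique, through the standing assumption that $A$ and $-B$ share no eigenvalue. So it cannot deliver the clause ``equation has a unique solution'' from the norm hypothesis. You get existence and uniqueness from the hypothesis alone, either by Banach's theorem on the finite-dimensional, hence complete, space $(\mathbb{R}^{n\times n},\|\cdot\|)$, or independently by your eigenvalue separation $|\mu|\le\|B\|<1/\|A^{-1}\|\le|\lambda|$. That check is essentially how the cited source argues solvability.

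The paper's version is the right one for its purposes only because it is embedded in the inexact analysis, where the extra residual term forces the Frobenius/spectral pairing anyway.
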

	Theorem~\ref{thm:fixed-point-convergence} corresponds to the exact-solve limit ($\eta \to 0$) of the inexact result in Theorem~\ref{thm:main-convergence}; see \eqref{eq:qdef}.
	
	Since the reflection method is used as the inner iteration for solving \eqref{eq:fixed-point-linear}, the inner matrix equation is generally solved only approximately. Consequently, the practical outer iteration can be regarded as an inexact fixed-point iteration.
	
	Let \(X_{\mathrm{inex}}^{(k+1)}\) denote the approximate solution obtained for the inner problem
	\[
	AX=D^{(k)}.
	\]
	We define the approximate fixed-point mapping by
	\[
	\widetilde{\mathcal{T}}\bigl(X^{(k)}\bigr) := X_{\mathrm{inex}}^{(k+1)}.
	\]
	If \(\Delta^{(k)}\) denotes the perturbation introduced by the inexact solution of the inner problem, then
	\[
	X_{\mathrm{inex}}^{(k+1)} = \mathcal{T}\bigl(X^{(k)}\bigr)+\Delta^{(k)}.
	\]
	
	\subsection{Column-wise decomposition of the inner problem}
	
	Equation \eqref{eq:fixed-point-linear} is equivalent to \(n\)
	independent linear systems for the columns of \(X^{(k+1)}\):
	\begin{equation}
		A\boldsymbol{x}_j^{(k+1)} = \boldsymbol{d}_j^{(k)},
		\qquad j=1,2,\ldots,n,
		\label{eq:column-wise-system}
	\end{equation}
	where \(\boldsymbol{d}_j^{(k)}\) denotes the \(j\)-th column of \(D^{(k)}\).
	
	Let
	\[
	A=\begin{pmatrix}\boldsymbol{a}_1^{\mathsf T} \\ \boldsymbol{a}_2^{\mathsf T} \\ \vdots \\ \boldsymbol{a}_n^{\mathsf T}\end{pmatrix},
	\]
	where $\boldsymbol{a}_i^{\mathsf T}$ denotes the $i$-th row of $A$. For fixed
	\(k\) and \(j\), the \(j\)-th column system in \eqref{eq:column-wise-system} can be written componentwise as
	\[
	\langle \boldsymbol{a}_i,\boldsymbol{x}_j^{(k+1)}\rangle = d_{ij}^{(k)},
	\qquad i=1,2,\ldots,n,
	\]
	where \(d_{ij}^{(k)}\) is the \(i\)-th component of \(\boldsymbol{d}_j^{(k)}\).
	
	More generally, consider the linear system
	\begin{equation}
		A\boldsymbol{x}=\boldsymbol{d},
		\label{eq:generic-linear-system}
	\end{equation}
	where \(A\in\mathbb{R}^{n\times n}\) is nonsingular. Each equation in \eqref{eq:generic-linear-system} defines an affine hyperplane
	\[
	H_i = \left\{ \boldsymbol{x}\in\mathbb{R}^n: \langle \boldsymbol{a}_i,\boldsymbol{x}\rangle=\boldsymbol{d}_i \right\},
	\qquad i=1,2,\ldots,n.
	\]
	Because \(A\) is nonsingular, these hyperplanes have a unique common intersection, namely the exact solution \(\boldsymbol{x}^\ast\) of \eqref{eq:generic-linear-system}.
	
	\subsection{The Reflection method for linear system}
	
	In this subsection, we prove that the spectral radius of the iteration matrix is strictly less than one, and hence the method converges monotonically to the exact solution for any initial guess. We begin with two lemmas on its basic properties.
	
	\begin{lemma}[{\cite{GuidaSbordone2023}}]
		\label{lem:reflection-point}
		Let
		\[
		H_i = \left\{ \boldsymbol{x}\in\mathbb{R}^n: \langle \boldsymbol{a}_i,\boldsymbol{x}\rangle=\boldsymbol{d}_i \right\},
		\qquad \boldsymbol{a}_i\neq 0,
		\]
		for \(i=1,2,\ldots,n\). Given an initial point \(\boldsymbol{x}^{(0)}\in\mathbb{R}^n\), the orthogonal projection of \(\boldsymbol{x}^{(0)}\) onto \(H_i\) is
		\[
		\boldsymbol{R}_i = \boldsymbol{x}^{(0)} + \frac{\boldsymbol{d}_i-\langle \boldsymbol{a}_i,\boldsymbol{x}^{(0)}\rangle}{\|\boldsymbol{a}_i\|_2^2}\,\boldsymbol{a}_i.
		\]
		The reflection of \(\boldsymbol{x}^{(0)}\) with respect to \(H_i\) is therefore
		\[
		\boldsymbol{Q}_i = \boldsymbol{x}^{(0)} + 2\frac{d_i-\langle a_i,\boldsymbol{x}^{(0)}\rangle}{\|a_i\|_2^2}\,a_i.
		\]
	\end{lemma}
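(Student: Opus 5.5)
The plan is to derive the projection formula directly from the geometry of the hyperplane, and then get the reflection from the standard relation between a point, its projection, and its mirror image. The projection $\boldsymbol{R}_i$ of $\boldsymbol{x}^{(0)}$ onto $H_i$ is characterized by two conditions: $\boldsymbol{R}_i\in H_i$, and $\boldsymbol{R}_i-\boldsymbol{x}^{(0)}$ is orthogonal to $H_i$. Since $H_i$ is a translate of the subspace $\{\boldsymbol{y}:\langle \boldsymbol{a}_i,\boldsymbol{y}\rangle=0\}$, its orthogonal complement is spanned by $\boldsymbol{a}_i$. I will therefore use the ansatz $\boldsymbol{R}_i=\boldsymbol{x}^{(0)}+t\,\boldsymbol{a}_i$ with a scalar $t$ to be determined.

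First, I substitute the ansatz into the membership condition $\langle \boldsymbol{a}_i,\boldsymbol{R}_i\rangle=d_i$. This gives
\[
\langle \boldsymbol{a}_i,\boldsymbol{x}^{(0)}\rangle+t\|\boldsymbol{a}_i\|_2^2=d_i .
\]
Because $\boldsymbol{a}_i\neq 0$, this can be solved uniquely for $t$:
\[
t=\frac{d_i-\langle \boldsymbol{a}_i,\boldsymbol{x}^{(0)}\rangle}{\|\boldsymbol{a}_i\|_2^2},
\]
which yields the stated formula for $\boldsymbol{R}_i$.

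Second, I confirm that this point really is the orthogonal projection, that is, the unique nearest point of $H_i$. Take any $\boldsymbol{y}\in H_i$. Then $\langle \boldsymbol{a}_i,\boldsymbol{y}-\boldsymbol{R}_i\rangle=0$, so $\boldsymbol{y}-\boldsymbol{R}_i$ is orthogonal to $\boldsymbol{R}_i-\boldsymbol{x}^{(0)}$. Pythagoras then gives
\[
\|\boldsymbol{y}-\boldsymbol{x}^{(0)}\|_2^2=\|\boldsymbol{y}-\boldsymbol{R}_i\|_2^2+\|\boldsymbol{R}_i-\boldsymbol{x}^{(0)}\|_2^2 ,
\]
and the right-hand side is minimized exactly when $\boldsymbol{y}=\boldsymbol{R}_i$.

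Third, I obtain the reflection. By definition, the reflection $\boldsymbol{Q}_i$ is the point for which $\boldsymbol{R}_i$ is the midpoint of the segment from $\boldsymbol{x}^{(0)}$ to $\boldsymbol{Q}_i$. Hence
\[
\boldsymbol{Q}_i=2\boldsymbol{R}_i-\boldsymbol{x}^{(0)}=\boldsymbol{x}^{(0)}+2t\,\boldsymbol{a}_i ,
\]
which is the claimed expression.

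As a consistency check, $\boldsymbol{Q}_i$ has the same distance to $H_i$ as $\boldsymbol{x}^{(0)}$, and it lies on the opposite side of $H_i$. Indeed, $\langle \boldsymbol{a}_i,\boldsymbol{Q}_i\rangle-d_i=-(\langle \boldsymbol{a}_i,\boldsymbol{x}^{(0)}\rangle-d_i)$. This confirms that the map is the Householder-type reflection $\boldsymbol{x}\mapsto \boldsymbol{x}-2\frac{\langle \boldsymbol{a}_i,\boldsymbol{x}\rangle-d_i}{\|\boldsymbol{a}_i\|_2^2}\boldsymbol{a}_i$.

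There is no genuine obstacle here. The only point needing care is the justification that the normal direction of $H_i$ is $\boldsymbol{a}_i$, so that the one-parameter ansatz loses no generality. The assumption $\boldsymbol{a}_i\neq 0$ is used exactly once, to divide by $\|\boldsymbol{a}_i\|_2^2$.
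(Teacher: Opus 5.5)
Your proof is correct and complete: the ansatz along $\boldsymbol{a}_i$, the solution for $t$, the Pythagorean argument showing that $\boldsymbol{R}_i$ is the unique nearest point of $H_i$, and $\boldsymbol{Q}_i=2\boldsymbol{R}_i-\boldsymbol{x}^{(0)}$ are all sound. The paper gives no proof of its own (it cites the lemma from Guida and Sbordone), so there is no route to compare with; note also that your second step already certifies $\boldsymbol{R}_i$ as the projection directly, so the one-parameter ansatz needs no separate justification.
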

	
	\begin{lemma}[{\cite{FengZhu2025}}]
		\label{lem:reflection-iteration}
		Let \(\boldsymbol{x}^{(\nu)}\) be the current iterate, and let \(\boldsymbol{P}_i^{(\nu)}\) be its reflection with respect to the hyperplane \(H_i\). Then
		\[
		\boldsymbol{P}_i^{(\nu)} = \boldsymbol{x}^{(\nu)} + 2\frac{\boldsymbol{d}_i-\langle \boldsymbol{a}_i,\boldsymbol{x}^{(\nu)}\rangle}{\|\boldsymbol{a}_i\|_2^2}\,\boldsymbol{a}_i,
		\qquad i=1,2,\ldots,n.
		\]
		The next iterate is defined as the centroid of the reflected points:
		\[
		\boldsymbol{x}^{(\nu+1)} = \frac{1}{n}\sum_{i=1}^n \boldsymbol{P}_i^{(\nu)}.
		\]
		Consequently,
		\[
		\boldsymbol{x}^{(\nu+1)} = \boldsymbol{x}^{(\nu)} + \frac{2}{n} \sum_{i=1}^n \frac{\boldsymbol{d}_i-\langle \boldsymbol{a}_i,\boldsymbol{x}^{(\nu)}\rangle}{\|\boldsymbol{a}_i\|_2^2}\,\boldsymbol{a}_i,
		\qquad \nu=0,1,2,\ldots.
		\]
	\end{lemma}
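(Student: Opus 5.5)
The statement to prove is Lemma~\ref{lem:reflection-iteration}, which is essentially a direct computation. The plan is to obtain the formula for $\boldsymbol{P}_i^{(\nu)}$ from Lemma~\ref{lem:reflection-point}, and then to average.

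\textbf{Step 1: the reflected points.} Apply Lemma~\ref{lem:reflection-point} with the base point $\boldsymbol{x}^{(0)}$ replaced by the current iterate $\boldsymbol{x}^{(\nu)}$. That lemma holds for an arbitrary point of $\mathbb{R}^n$. For completeness, I would recall why the formula is right. The orthogonal projection $\boldsymbol{R}_i$ of $\boldsymbol{x}^{(\nu)}$ onto $H_i$ has the form $\boldsymbol{x}^{(\nu)}+t\boldsymbol{a}_i$, since $\boldsymbol{a}_i$ is normal to $H_i$. Imposing $\langle \boldsymbol{a}_i,\boldsymbol{R}_i\rangle=\boldsymbol{d}_i$ gives
\[
t=\frac{\boldsymbol{d}_i-\langle \boldsymbol{a}_i,\boldsymbol{x}^{(\nu)}\rangle}{\|\boldsymbol{a}_i\|_2^2},
\]
which is well defined because $\boldsymbol{a}_i\neq 0$ (this holds since $A$ is nonsingular). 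The reflection is $\boldsymbol{P}_i^{(\nu)}=2\boldsymbol{R}_i-\boldsymbol{x}^{(\nu)}$, which is the first displayed formula of the lemma.

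\textbf{Step 2: the centroid.} Substitute these expressions into the definition $\boldsymbol{x}^{(\nu+1)}=\frac1n\sum_{i=1}^n \boldsymbol{P}_i^{(\nu)}$. Each $\boldsymbol{P}_i^{(\nu)}$ contains the term $\boldsymbol{x}^{(\nu)}$, so these terms sum to $n\boldsymbol{x}^{(\nu)}$. Dividing by $n$ leaves
\[
\boldsymbol{x}^{(\nu)}+\frac{2}{n}\sum_{i=1}^n \frac{\boldsymbol{d}_i-\langle \boldsymbol{a}_i,\boldsymbol{x}^{(\nu)}\rangle}{\|\boldsymbol{a}_i\|_2^2}\,\boldsymbol{a}_i ,
\]
which is exactly the claimed recursion, valid for every $\nu\ge 0$.

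There is no real obstacle. The only point needing care is the nondegeneracy $\boldsymbol{a}_i\neq0$, which makes the divisions legitimate and is guaranteed by the nonsingularity of $A$. Looking ahead to the spectral-radius argument, I would also record the recursion in matrix form,
\[
\boldsymbol{x}^{(\nu+1)}=\Bigl(I-\tfrac{2}{n}A^{\mathsf T}\Lambda A\Bigr)\boldsymbol{x}^{(\nu)}+\tfrac{2}{n}A^{\mathsf T}\Lambda\boldsymbol{d},
\qquad \Lambda=\operatorname{diag}\bigl(\|\boldsymbol{a}_i\|_2^{-2}\bigr).
\]
This follows because $\sum_i \boldsymbol{a}_i c_i = A^{\mathsf T}\boldsymbol{c}$ for any vector $\boldsymbol{c}$.
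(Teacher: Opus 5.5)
Your proof is correct and follows the route the paper intends. The paper defers the details to its cited reference, but the derivation is the one you give: apply Lemma~\ref{lem:reflection-point} at the current iterate $\boldsymbol{x}^{(\nu)}$, so that $\boldsymbol{P}_i^{(\nu)}=2\boldsymbol{R}_i-\boldsymbol{x}^{(\nu)}$, and then average; your matrix form, with $\Lambda$ in place of the paper's $E$ and justified by $\boldsymbol{a}_i\neq 0$ from the nonsingularity of $A$, matches the compact form the paper records right after the lemma.
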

	The proof of Lemma~\ref{lem:reflection-iteration} can be found in Section~2.3 of \cite{FengZhu2025}.
	
	Let
	$E = \operatorname{diag} \left( \|\boldsymbol{a}_1\|_2^{-2}, \|\boldsymbol{a}_2\|_2^{-2}, \ldots, \|\boldsymbol{a}_n\|_2^{-2} \right)$.
	Then the iteration in Lemma~\ref{lem:reflection-iteration} can be written compactly as
	\[
	\boldsymbol{x}^{(\nu+1)} = \boldsymbol{x}^{(\nu)} + \frac{2}{n} A^{\mathsf T}E\bigl(\boldsymbol{d}-A\boldsymbol{x}^{(\nu)}\bigr).
	\]
	Equivalently,
	\begin{equation}
		\boldsymbol{x}^{(\nu+1)} = \left( I_n-\frac{2}{n}A^{\mathsf T}EA \right)\boldsymbol{x}^{(\nu)} + \frac{2}{n}A^{\mathsf T}E\boldsymbol{d}.
		\label{eq:reflection-affine}
	\end{equation}
	
	The convergence of the reflection iteration is stated below.
	
	\begin{theorem}
		\label{thm:reflection-convergence}
		For all \(n\ge 2\), let \(A\in\mathbb{R}^{n\times n}\) be nonsingular, and \(x^\ast\) be the exact solution of
		\[
		A\boldsymbol{x}=\boldsymbol{d}.
		\]
		For any initial vector \(\boldsymbol{x}^{(0)}\in\mathbb{R}^n\), from Lemma~\ref{lem:reflection-iteration}, we have
		\[
		\boldsymbol{x}^{(\nu+1)} = \boldsymbol{x}^{(\nu)} + \frac{2}{n} \sum_{i=1}^n \frac{\boldsymbol{d}_i-\langle \boldsymbol{a}_i,\boldsymbol{x}^{(\nu)}\rangle}{\|\boldsymbol{a}_i\|_2^2}\,\boldsymbol{a}_i,
		\qquad \nu=0,1,2,\ldots,
		\]
		then the iteration can be written in matrix form as
		\begin{equation}
			\boldsymbol{x}^{(\nu+1)} = \left( I_n-\frac{2}{n}A^{\mathsf T}EA \right)\boldsymbol{x}^{(\nu)} + \frac{2}{n}A^{\mathsf T}E\boldsymbol{d}.
			\label{eq:reflection_matrix_form}
		\end{equation}
		satisfies
		\[
		\|\boldsymbol{x}^{(\nu+1)}-\boldsymbol{x}^\ast\|_2 \leq \|\boldsymbol{x}^{(\nu)}-\boldsymbol{x}^\ast\|_2,
		\qquad \nu=0,1,2,\ldots.
		\]
		with equality holding if and only if \(\boldsymbol{x}^{(\nu)}=\boldsymbol{x}^\ast\).
		
		Moreover, the iteration matrix \(M = I_n - \frac{2}{n} A^{\mathsf T} E A\) satisfies \(\rho(M) < 1\) for any nonsingular \(A\). Consequently,
		\[
		\lim_{\nu \to \infty} \boldsymbol{x}^{(\nu)} = \boldsymbol{x}^{*}.
		\]
	\end{theorem}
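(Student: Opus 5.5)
The plan is to work with the error recursion and exploit the fact that the iteration matrix is symmetric. Set $\boldsymbol{e}^{(\nu)}:=\boldsymbol{x}^{(\nu)}-\boldsymbol{x}^\ast$. Since $A\boldsymbol{x}^\ast=\boldsymbol{d}$, the vector $\boldsymbol{x}^\ast$ is a fixed point of \eqref{eq:reflection_matrix_form}, because $\frac{2}{n}A^{\mathsf T}E(\boldsymbol{d}-A\boldsymbol{x}^\ast)=0$. Subtracting the fixed-point relation from \eqref{eq:reflection_matrix_form} gives $\boldsymbol{e}^{(\nu+1)}=M\boldsymbol{e}^{(\nu)}$ with $M=I_n-\frac{2}{n}A^{\mathsf T}EA$. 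Everything then reduces to the spectrum of the symmetric matrix $S:=A^{\mathsf T}EA$.

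\textbf{Step 1: $S$ is symmetric positive definite.} Write $S=\widehat A^{\mathsf T}\widehat A$, where $\widehat A:=E^{1/2}A$. This is well defined because every row $\boldsymbol{a}_i$ of a nonsingular $A$ is nonzero, so $E$ has positive diagonal. The matrix $\widehat A$ is nonsingular, and its $i$-th row is the unit vector $\boldsymbol{a}_i/\|\boldsymbol{a}_i\|_2$. Hence $S$ is symmetric positive definite, with eigenvalues $\lambda_1,\dots,\lambda_n>0$.

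\textbf{Step 2: the eigenvalues of $S$ sum to $n$.} We have
\[
\operatorname{tr}(S)=\operatorname{tr}(\widehat A\widehat A^{\mathsf T})=\sum_{i=1}^n \frac{\|\boldsymbol{a}_i\|_2^2}{\|\boldsymbol{a}_i\|_2^2}=n,
\]
so $\sum_i\lambda_i=n$.

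\textbf{Step 3: the eigenvalues of $S$ lie strictly inside $(0,n)$.} This is where $n\ge 2$ enters. Because all $\lambda_i$ are strictly positive and there are at least two of them, each satisfies $\lambda_i=n-\sum_{l\ne i}\lambda_l<n$. Thus $0<\lambda_i<n$ for every $i$.

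\textbf{Step 4: $\rho(M)<1$.} The eigenvalues of $M$ are $\mu_i=1-\frac{2}{n}\lambda_i$, and Step 3 places each of them in $(-1,1)$. Therefore $\rho(M)<1$. For $n=1$ one would get $\lambda_1=1$ and $\mu_1=-1$, which is consistent with the exclusion of that case.

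\textbf{Step 5: monotonicity and convergence.} Since $M$ is symmetric, $\|M\|_2=\rho(M)$. This gives
\[
\|\boldsymbol{e}^{(\nu+1)}\|_2\le\rho(M)\,\|\boldsymbol{e}^{(\nu)}\|_2\le\|\boldsymbol{e}^{(\nu)}\|_2 .
\]
If equality holds, then $\rho(M)\|\boldsymbol{e}^{(\nu)}\|_2=\|\boldsymbol{e}^{(\nu)}\|_2$. Because $\rho(M)<1$, this forces $\boldsymbol{e}^{(\nu)}=0$, i.e.\ $\boldsymbol{x}^{(\nu)}=\boldsymbol{x}^\ast$. The converse is immediate, since $M\cdot 0=0$. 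Iterating the bound yields $\|\boldsymbol{e}^{(\nu)}\|_2\le\rho(M)^\nu\|\boldsymbol{e}^{(0)}\|_2\to0$, which proves $\boldsymbol{x}^{(\nu)}\to\boldsymbol{x}^\ast$.

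The only genuinely nontrivial point is the upper bound $\lambda_i<n$ in Step 3. The trace identity from Step 2 settles it cleanly, so I expect no real obstacle beyond noticing that identity.
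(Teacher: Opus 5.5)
Your proposal is correct and follows essentially the same route as the paper: the error recursion $\boldsymbol{e}^{(\nu+1)}=M\boldsymbol{e}^{(\nu)}$, positive definiteness of $A^{\mathsf T}EA$, the trace identity $\operatorname{tr}(A^{\mathsf T}EA)=n$ placing the eigenvalues in $(0,n)$, and $\|M\|_2=\rho(M)<1$ by symmetry. You are slightly more explicit than the paper in two places: you note that nonsingularity of $A$ makes $E$ well defined, and you point out that $n\ge 2$ is exactly what gives the strict bound $\lambda_i<n$.
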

	\begin{proof}
		Let
		\[
		M=I_n-\frac{2}{n}A^{\mathsf T}EA.
		\]
		Since $A\boldsymbol{x}^\ast=\boldsymbol{d}$, it follows from \eqref{eq:reflection_matrix_form} that
		\begin{equation}
			\boldsymbol{x}^{(\nu+1)}-\boldsymbol{x}^\ast = M\left(\boldsymbol{x}^{(\nu)}-\boldsymbol{x}^\ast\right).
			\label{eq:error_iteration}
		\end{equation}
		
		Define \(H = A^{\mathsf T} E A\). Since \(E\) is positive definite and \(A\) is nonsingular, \(H\) is symmetric positive definite. Thus, all eigenvalues of \(H\) are positive. Moreover,
		\[
		\operatorname{tr}(H) = \operatorname{tr}(A^{\mathsf T} E A) = \operatorname{tr}(E A A^{\mathsf T}) = \sum_{i=1}^{n} \frac{\|\boldsymbol{a}_i\|_2^2}{\|\boldsymbol{a}_i\|_2^2} = n.
		\]
		Let \(\lambda_i(H)\) be the eigenvalues of \(H\), $i =1,2,\cdots,n$. Since \(\lambda_i(H)>0\) and \(\sum_{i=1}^n \lambda_i(H)=n\), we have \(0<\lambda_i(H)<n\) for each \(i\).
		
		Since \(H\) is symmetric, \(M = I_n - \frac{2}{n}H\) is also symmetric with eigenvalues
		\[
		\mu_i(M) = 1 - \frac{2}{n}\lambda_i(H), \qquad i=1,\dots,n.
		\]
		It follows immediately that \(-1 < \mu_i(M) < 1\), hence \(\rho(M)<1\), and consequently \(\|M\|_2 = \rho(M) < 1\).
		
		Therefore,
		\[
		\|\boldsymbol{x}^{(\nu+1)}-\boldsymbol{x}^\ast\|_2 = \|M (\boldsymbol{x}^{(\nu)}-\boldsymbol{x}^\ast)\|_2 \le \|M\|_2 \|\boldsymbol{x}^{(\nu)}-\boldsymbol{x}^\ast\|_2 < \|\boldsymbol{x}^{(\nu)}-\boldsymbol{x}^\ast\|_2.
		\]
		Finally, recursively applying \eqref{eq:error_iteration} yields $\boldsymbol{x}^{(\nu+1)}-\boldsymbol{x}^\ast = M^{\nu+1}\left(\boldsymbol{x}^{(0)}-\boldsymbol{x}^\ast\right)$. Since \(\rho(M) < 1\), we have \(M^{\nu} \to 0\), and hence \(\lim_{\nu\to\infty} \boldsymbol{x}^{(\nu)} = \boldsymbol{x}^{*}$. This completes the proof.
	\end{proof}
	
	\begin{remark}
		\label{rem:n=1}
		When $n=1$, the only eigenvalue of matrix $H$ is $1$, and hence the eigenvalue of matrix $M$ is $-1$, which generally does not converge. The following is a counterexample.
		
		We show that Theorem~\ref{thm:reflection-convergence} fails for \(n=1\). Take
		\[
		A=[1],\qquad d=[1],
		\]
		so \(x^*=1\). The reflection iteration becomes
		\[
		x^{(\nu+1)} = x^{(\nu)}+2(1-x^{(\nu)}) = 2-x^{(\nu)}.
		\]
		With \(x^{(0)}=0\), this gives
		\[
		0,\ 2,\ 0,\ 2,\ldots,
		\]
		which does not converge to \(x^*=1\). Equivalently, we have
		\[
		x^{(\nu+1)}-1=-(x^{(\nu)}-1),
		\qquad
		\|x^{(\nu+1)}-1\|_2=\|x^{(\nu)}-1\|_2.
		\]
		Moreover, since \(H=A^{\mathrm T}EA=[1]\), its only eigenvalue is \(1\), and hence the only eigenvalue of \(M=I-2H\) is \(-1\). Thus
		\[
		M=[-1],\qquad \rho(M)=1,
		\]
		contradicting \(\rho(M)<1\) in Theorem~\ref{thm:reflection-convergence}. Hence Theorem~\ref{thm:reflection-convergence} requires \(n\ge2\).
	\end{remark}
	
	\section{The Reflection Method for Sylvester Matrix Equation}
	\label{sec:reflection_sylvester}
	
	In this section, we will apply the reflection method to the Sylvester matrix equation \eqref{eq:sylvester-prelim}. Given an initial matrix \(X^{(0)}\in\mathbb{R}^{n\times n}\), we use the fixed-point form
	\begin{equation}
		AX^{(k+1)}=D^{(k)},
		\qquad k=0,1,2,\ldots.
		\label{eq:outer_matrix_equation}
	\end{equation}
	where $D^{(k)}=C-X^{(k)}B.$ Then \eqref{eq:outer_matrix_equation} is equivalent to the following \(n\) linear systems:
	\begin{equation}
		A\boldsymbol{x}_j^{(k+1)}=\boldsymbol{d}_j^{(k)},
		\qquad j=1,2,\ldots,n.
		\label{eq:column_systems}
	\end{equation}
	
	For each column system in \eqref{eq:column_systems}, we apply the reflection iteration. Let \(\boldsymbol{x}_j^{(k+1,\nu)}\) denote the \(\nu\)-th inner approximation to the \(j\)-th column of \(X^{(k+1)}\) at the \(k\)-th outer iteration. Then
	\begin{equation}
		\label{eq:column_reflection_iteration}
		\begin{split}
			\boldsymbol{x}_j^{(k+1,\nu+1)}
			&= \boldsymbol{x}_j^{(k+1,\nu)} + \frac{2}{n}A^{\mathrm{T}}E \left( \boldsymbol{d}_j^{(k)}-A\boldsymbol{x}_j^{(k+1,\nu)} \right)
			\\
			&= \left( I-\frac{2}{n}A^{\mathrm{T}}EA \right)\boldsymbol{x}_j^{(k+1,\nu)} + \frac{2}{n}A^{\mathrm{T}}E\boldsymbol{d}_j^{(k)},
			\qquad \nu=0,1,2,\ldots.
		\end{split}
	\end{equation}
	where $E$ is the diagonal matrix
	$E= \operatorname{diag}\left( {\|\boldsymbol{a}_1\|_2^{-2}},{\|\boldsymbol{a}_2\|_2^{-2}},\ldots,{\|\boldsymbol{a}_n\|_2^{-2}} \right)$
	and $ \boldsymbol{x}_j^{(k+1,\nu+1)}$ denotes the $(\nu+1)$-th inner iterate for the $j$-th column of $X^{(k+1)}.$
	
	\begin{proposition}
		\label{prop:column-inner-convergence}
		For all $n\ge 2$, let \(A\in\mathbb{R}^{n\times n}\) be nonsingular, and \(x_j^{(k+1,*)}\) be the exact solution of
		\[
		A\boldsymbol{x}_j^{(k+1)}=\boldsymbol{d}_j^{(k)},\qquad j=1,2,\ldots,n.
		\]
		For any initial vector \(x_j^{(k+1,0)}\in\mathbb{R}^n\), consider the reflection iteration \eqref{eq:column_reflection_iteration}, which satisfies
		\[
		\|\boldsymbol{x}_j^{(k+1,\nu+1)}-\boldsymbol{x}_j^{(k+1,*)}\|_2 \leq \|\boldsymbol{x}_j^{(k+1,\nu)}-\boldsymbol{x}_j^{(k+1,*)}\|_2,
		\qquad \nu=0,1,2,\ldots.
		\]
		with equality holding if and only if \(\boldsymbol{x}_j^{(k+1,\nu)}=\boldsymbol{x}_j^{(k+1,*)}\).
		Moreover, the iteration matrix \(M = I_n - \frac{2}{n} A^{\mathsf T}  E A\) satisfies \(\rho(M) < 1\) for any nonsingular \(A\). Consequently,
		\[
		\lim_{\nu \to \infty} \boldsymbol{x}_j^{(k+1,\nu)} = \boldsymbol{x}_j^{(k+1,*)}.
		\]
	\end{proposition}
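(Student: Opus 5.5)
The proof is a direct reduction to Theorem~\ref{thm:reflection-convergence}, applied column by column. Fix the outer index $k$ and a column index $j\in\{1,\ldots,n\}$. The inner iteration \eqref{eq:column_reflection_iteration} is exactly the affine iteration \eqref{eq:reflection_matrix_form} with right-hand side $\boldsymbol{d}=\boldsymbol{d}_j^{(k)}$. Its iteration matrix is $M=I_n-\frac{2}{n}A^{\mathsf T}EA$, which depends only on $A$ and not on $j$ or $k$. Since $A$ is nonsingular, the system $A\boldsymbol{x}=\boldsymbol{d}_j^{(k)}$ has the unique solution $\boldsymbol{x}_j^{(k+1,*)}=A^{-1}\boldsymbol{d}_j^{(k)}$.

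First, we derive the error recursion. Subtracting the identity $\boldsymbol{x}_j^{(k+1,*)}=M\boldsymbol{x}_j^{(k+1,*)}+\frac{2}{n}A^{\mathsf T}E\boldsymbol{d}_j^{(k)}$ from \eqref{eq:column_reflection_iteration} gives
\[
\boldsymbol{x}_j^{(k+1,\nu+1)}-\boldsymbol{x}_j^{(k+1,*)}=M\bigl(\boldsymbol{x}_j^{(k+1,\nu)}-\boldsymbol{x}_j^{(k+1,*)}\bigr).
\]
The identity being subtracted holds because $A\boldsymbol{x}_j^{(k+1,*)}=\boldsymbol{d}_j^{(k)}$.

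Second, we invoke the spectral facts from the proof of Theorem~\ref{thm:reflection-convergence}. The matrix $H=A^{\mathsf T}EA$ is symmetric positive definite with $\operatorname{tr}(H)=n$. Because $n\ge2$ and all eigenvalues are positive, each eigenvalue satisfies $\lambda_i(H)\in(0,n)$. Hence every eigenvalue of $M$ lies in $(-1,1)$, and since $M$ is symmetric, $\|M\|_2=\rho(M)<1$.

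Third, we obtain the monotonicity and the limit. The bound $\|M\boldsymbol{e}\|_2\le\|M\|_2\|\boldsymbol{e}\|_2$ gives the claimed inequality. For the equality case, if $\boldsymbol{e}\neq0$ then $\|M\boldsymbol{e}\|_2\le\rho(M)\|\boldsymbol{e}\|_2<\|\boldsymbol{e}\|_2$, so the inequality is strict. Conversely, $\boldsymbol{e}=0$ trivially gives equality. Finally, iterating the error recursion yields $\boldsymbol{e}^{(\nu)}=M^\nu\boldsymbol{e}^{(0)}$, and $\rho(M)<1$ gives $M^\nu\to0$, so $\boldsymbol{x}_j^{(k+1,\nu)}\to\boldsymbol{x}_j^{(k+1,*)}$ for every $j$.

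There is no substantive obstacle. The only point needing care is the strictness $\lambda_i(H)<n$, which requires at least two positive eigenvalues summing to $n$; this is precisely where the hypothesis $n\ge2$ enters, and it is the spectral step of Theorem~\ref{thm:reflection-convergence}. As a side remark, since $M$ is common to all columns, the statement lifts immediately to the matrix form $X^{(k+1,\nu+1)}-X^{(k+1,*)}=M\bigl(X^{(k+1,\nu)}-X^{(k+1,*)}\bigr)$, which yields contraction in the Frobenius norm.
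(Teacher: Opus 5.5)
Your proposal is correct and takes the same route as the paper. The paper's proof is just the specialization of Theorem~\ref{thm:reflection-convergence} with $\boldsymbol{d}=\boldsymbol{d}_j^{(k)}$ and $\boldsymbol{x}^{(\nu)}=\boldsymbol{x}_j^{(k+1,\nu)}$; you additionally re-run that theorem's argument ($\operatorname{tr}(H)=n$, hence $\lambda_i(H)\in(0,n)$ for $n\ge2$, hence $\|M\|_2=\rho(M)<1$) rather than only citing it.
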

	
	\begin{proof}
		The result follows directly from Theorem~\ref{thm:reflection-convergence} by taking
		\[
		\boldsymbol{d}=\boldsymbol{d}_j^{(k)},\qquad \boldsymbol{x}^{(\nu)}=\boldsymbol{x}_j^{(k+1,\nu)}.
		\]
	\end{proof}
	
	By Proposition~\ref{prop:column-inner-convergence}, the sequence $\boldsymbol{x}_j^{(k+1,\nu+1)}$ also converges to the exact solution \(\boldsymbol{x}_j^{(k+1,*)}\), that is,
	\[
	\lim_{\nu\to\infty}\boldsymbol{x}_j^{(k+1,\nu)}=\boldsymbol{x}_j^{(k+1,*)},\quad j=1,2,\cdots,n.
	\]
	
	Stacking all columns together gives the matrix form of the reflection iteration for the Sylvester equation
	\begin{equation}
		X^{(k+1,\nu+1)} = X^{(k+1,\nu)} + \frac{2}{n}A^{\mathsf T}E \left( D^{(k)}-AX^{(k+1,\nu)} \right)
		\label{eq:matrix_reflection_iteration}
	\end{equation}
	where $X^{(k+1,\nu+1)}$ denotes the matrix approximation obtained after $(\nu+1)$-th inner reflection iteration for solving the matrix equation \eqref{eq:outer_matrix_equation} at the $k$-th outer iteration.
	
	Equivalently,
	\[
	X^{(k+1,\nu+1)} = \left(I_n-\frac{2}{n}A^{\mathsf T}EA\right)X^{(k+1,\nu)} + \frac{2}{n}A^{\mathsf T}ED^{(k)}.
	\]
	
	\begin{theorem}
		\label{thm:matrix_inner_convergence}
		For all $n\ge 2$, let \(A\in\mathbb{R}^{n\times n}\) be nonsingular, and \(X^{(k+1,*)}\) be the exact solution of
		\[
		AX ^{(k+1)}=D^{(k)}.
		\]
		For any initial matrix \(X^{(k+1,0)} \in\mathbb{R}^{n\times n}\), consider the reflection iteration \eqref{eq:matrix_reflection_iteration}, which satisfies
		\[
		\|X^{(k+1,\nu+1)}-X^{(k+1,*)}\|_F \leq \|X^{(k+1,\nu)}-X^{(k+1,*)}\|_F,
		\qquad \nu=0,1,2,\ldots.
		\]
		with equality holding if and only if \(X^{(k+1,\nu)}=X^{(k+1,*)}\).
		Moreover, the iteration matrix \(M = I_n - \frac{2}{n} A^{\mathsf T} E A\) satisfies \(\rho(M) < 1\) for any nonsingular \(A\). Consequently,
		\[
		\lim_{\nu \to \infty} X^{(k+1,\nu)} = X^{(k+1,*)}.
		\]
	\end{theorem}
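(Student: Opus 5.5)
The plan is to reduce the matrix statement to the column-wise result, Proposition~\ref{prop:column-inner-convergence}, and to use that the Frobenius norm is the Euclidean norm of the stacked columns. First derive the matrix error recursion. The exact inner solution satisfies $AX^{(k+1,*)}=D^{(k)}$. Substituting $D^{(k)}=AX^{(k+1,*)}$ into \eqref{eq:matrix_reflection_iteration} and subtracting $X^{(k+1,*)}$ from both sides gives
\[
X^{(k+1,\nu+1)}-X^{(k+1,*)}=M\bigl(X^{(k+1,\nu)}-X^{(k+1,*)}\bigr),\qquad M=I_n-\tfrac{2}{n}A^{\mathsf T}EA .
\]
Column $j$ of this identity is exactly the error recursion \eqref{eq:error_iteration} for the $j$-th column system in \eqref{eq:column_systems}. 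The claim $\rho(M)<1$, together with $M$ symmetric and $\|M\|_2=\rho(M)$, is inherited verbatim from Theorem~\ref{thm:reflection-convergence}, since $M$ depends only on $A$ and not on the right-hand side.

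For monotonicity, write $E^{(\nu)}:=X^{(k+1,\nu)}-X^{(k+1,*)}$ with columns $\boldsymbol{e}_j^{(\nu)}$. Then
\[
\|E^{(\nu+1)}\|_F^2=\sum_{j=1}^n\|M\boldsymbol{e}_j^{(\nu)}\|_2^2\le \|M\|_2^2\sum_{j=1}^n\|\boldsymbol{e}_j^{(\nu)}\|_2^2=\|M\|_2^2\,\|E^{(\nu)}\|_F^2 .
\]
Equivalently, each summand is bounded using Proposition~\ref{prop:column-inner-convergence}.

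The equality case is the only step that needs care, because equality of sums must be converted into equality of each term. If $E^{(\nu)}\neq 0$, then some column $\boldsymbol{e}_j^{(\nu)}\neq 0$. For that column, $\|M\boldsymbol{e}_j^{(\nu)}\|_2\le\|M\|_2\|\boldsymbol{e}_j^{(\nu)}\|_2<\|\boldsymbol{e}_j^{(\nu)}\|_2$, using $\|M\|_2<1$. Every other column satisfies the non-strict inequality, so summing gives strict inequality for the Frobenius norms. Conversely, if $E^{(\nu)}=0$, both sides vanish. Hence equality holds exactly when $X^{(k+1,\nu)}=X^{(k+1,*)}$.

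For convergence, iterate the recursion to get $E^{(\nu)}=M^{\nu}E^{(0)}$. This gives $\|E^{(\nu)}\|_F\le\|M\|_2^{\nu}\|E^{(0)}\|_F\to 0$, because $\|M\|_2=\rho(M)<1$; that is, convergence is linear with rate $\rho(M)$. Alternatively, apply the columnwise limits from Proposition~\ref{prop:column-inner-convergence} to each of the finitely many columns.
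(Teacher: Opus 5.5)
Your proposal is correct and follows essentially the same route as the paper: the error recursion with iteration matrix $M$, the bound $\|MY\|_F\le\|M\|_2\|Y\|_F$ with $\|M\|_2=\rho(M)<1$ taken from Theorem~\ref{thm:reflection-convergence}, and then iterating to get decay like $\|M\|_2^{\nu}$. Your column-by-column treatment of the equality case is valid, though strictness also follows in one line from $\|M\|_2\|Y\|_F<\|Y\|_F$ for any nonzero error $Y$, which is how the paper argues it.
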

	
	\begin{proof}
		\[
		\begin{aligned}
			\|X^{(k+1,\nu+1)} - X^{(k+1,*)}\|_F &= \left\|M\left(X^{(k+1,\nu)} - X^{(k+1,*)}\right)\right\|_F \\
			&\le \|M\|_2 \left\|X^{(k+1,\nu)} - X^{(k+1,*)}\right\|_F.
		\end{aligned}
		\]
		
		By Theorem~\ref{thm:reflection-convergence}, $\|M\|_2 = \rho(M) < 1$. Hence,
		\[
		\|X^{(k+1,\nu)} - X^{(k+1,*)}\|_F \le \|M\|_2^\nu \left\|X^{(k+1,0)} - X^{(k+1,*)}\right\|_F \to 0,
		\]
		and the inequality is strict whenever $X^{(k+1,\nu)} \neq X^{(k+1,*)}$. Therefore equality holds if and only if $X^{(k+1,\nu)} = X^{(k+1,*)}$.
		Consequently
		\[
		\lim_{\nu \to \infty} X^{(k+1,\nu)} = X^{(k+1,*)}.
		\]
	\end{proof}
	Therefore, we have shown that for each fixed outer iteration $k$, the matrix equation \eqref{eq:outer_matrix_equation} converges using the reflection method, and it converges to the exact solution of the equation \(AX=D^{(k)}\).
	
	\section{Convergence analysis}
	\label{sec:convergence}
	
	In practical computations, the matrix equation
	\begin{equation}
		A X = D^{(k)}, \qquad D^{(k)} = C - X^{(k)} B,
		\label{eq:4.1}
	\end{equation}
	is not solved exactly at each outer iteration. Therefore, the proposed reflection-based method should be regarded as an inexact fixed-point iteration. Since $A$ is nonsingular, the continuous Sylvester equation \eqref{eq:sylvester-prelim} can be written in the fixed-point form
	\[
	X = \mathcal{T}(X) := A^{-1}(C - X B).
	\]
	
	Given $ X^{(k+1,0)}=X^{(k)}$, the exact update is $X_{\mathrm{ex}}^{(k+1)} = \mathcal{T}(X^{(k)}) = A^{-1} D^{(k)}$. In our method, we compute only an approximation $X_{\mathrm{inex}}^{(k+1)}$ via a finite number of inner iterations. Define the final inner residual
	\[
	R_{\mathrm{in}}^{(k+1)} :=AX_{\mathrm{inex}}^{(k+1)} - D^{(k)}.
	\]
	and the corresponding initial inner residual
	\[
	R_{\mathrm{in}}^{(k+1,0)} := A X^{(k+1,0)} - D^{(k)}.
	\]
	Then
	\[
	X_{\mathrm{inex}}^{(k+1)} = A^{-1} D^{(k)} + A^{-1} R_{\mathrm{in}}^{(k+1)} = \mathcal{T}(X^{(k)}) + \Delta^{(k)},
	\]
	where $\Delta^{(k)} := A^{-1} R_{\mathrm{in}}^{(k+1)}$.
	
	Taking the next outer iterate as $X^{(k+1)} = X_{\mathrm{inex}}^{(k+1)}$, we obtain the perturbed iteration
	\begin{equation}
		X^{(k+1)} = \mathcal{T}(X^{(k)}) + \Delta^{(k)}.
		\label{eq:4.5}
	\end{equation}
	Since $X^* = \mathcal{T}(X^*)$, subtracting $X^*$ from \eqref{eq:4.5} yields
	\begin{equation}
		X^{(k+1)} - X^* = -A^{-1} \bigl(X^{(k)} - X^*\bigr) B + A^{-1} R_{\mathrm{in}}^{(k+1)}.
		\label{eq:4.6}
	\end{equation}
	
	The following elementary bound will be used in the convergence proof.
	
	\begin{lemma}
		\label{lem:4.1}
		Let \(A,B,C\in\mathbb{R}^{n\times n}\), where $A$ is nonsingular, and let \(X^*\) be the solution of \eqref{eq:sylvester-prelim}.
		For the \(k\)-th outer iterate \(X^{(k)}\), define $X^{(k+1,0)}:=X^{(k)}$, and set
		\[
		R_{\mathrm{in}}^{(k+1,0)}:=AX^{(k+1,0)}-D^{(k)},
		\]
		then
		\[
		\left\|R_{\mathrm{in}}^{(k+1,0)}\right\|_F \leq \left(\|A\|_2+\|B\|_2\right) \left\|X^{(k)}-X^*\right\|_F.
		\]
	\end{lemma}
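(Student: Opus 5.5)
The plan is to rewrite the initial inner residual in terms of the error \(X^{(k)}-X^*\), using only the fact that \(X^*\) solves \eqref{eq:sylvester-prelim}. Since \(X^{(k+1,0)}=X^{(k)}\) and \(D^{(k)}=C-X^{(k)}B\), the definition gives
\[
R_{\mathrm{in}}^{(k+1,0)} = AX^{(k)}-C+X^{(k)}B = AX^{(k)}+X^{(k)}B-C .
\]
In other words, the initial inner residual is exactly the Sylvester residual of the current outer iterate. Substituting \(C=AX^*+X^*B\) then gives
\[
R_{\mathrm{in}}^{(k+1,0)} = A\bigl(X^{(k)}-X^*\bigr)+\bigl(X^{(k)}-X^*\bigr)B .
\]

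Next, apply the triangle inequality in the Frobenius norm. Then bound each term with the standard mixed inequalities \(\|AY\|_F\le\|A\|_2\|Y\|_F\) and \(\|YB\|_F\le\|Y\|_F\|B\|_2\). These already appear implicitly in the proof of Theorem~\ref{thm:matrix_inner_convergence}, and they follow by applying the spectral norm bound column by column (respectively row by row, via transposition). This yields
\[
\bigl\|R_{\mathrm{in}}^{(k+1,0)}\bigr\|_F\le\bigl(\|A\|_2+\|B\|_2\bigr)\bigl\|X^{(k)}-X^*\bigr\|_F .
\]

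There is no real obstacle here. The only point needing care is the sign convention \(R_{\mathrm{in}}=AX-D^{(k)}\), so that the \(+X^{(k)}B\) term appears with the correct sign. Nonsingularity of \(A\) is not actually needed for the bound itself. It enters only in guaranteeing that \(X^*\) is the well-defined solution referred to in the statement.
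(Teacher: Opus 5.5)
Your proof is correct and follows the paper's argument exactly. You substitute $C=AX^*+X^*B$ to write $R_{\mathrm{in}}^{(k+1,0)}=A(X^{(k)}-X^*)+(X^{(k)}-X^*)B$, then apply the triangle inequality together with $\|AY\|_F\le\|A\|_2\|Y\|_F$ and $\|YB\|_F\le\|Y\|_F\|B\|_2$. One minor quibble about your closing aside: nonsingularity of $A$ is not what makes $X^*$ well defined; that comes from the standing assumption that $A$ and $-B$ share no eigenvalues. Nonsingularity plays no role in this lemma and is needed only elsewhere, for the fixed-point map $\mathcal{T}$.
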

	\begin{proof}
		Since \(AX^*+X^*B=C\), we have
		\[
		R_{\mathrm{in}}^{(k+1,0)} =A(X^{(k)}-X^*)+(X^{(k)}-X^*)B.
		\]
		Taking Frobenius norms and using spectral-norm compatibility yields
		\[
		\begin{aligned}
			\|R_{\mathrm{in}}^{(k+1,0)}\|_F
			&\le \|A(X^{(k)}-X^*)\|_F + \|(X^{(k)}-X^*)B\|_F \\
			&\le (\|A\|_2+\|B\|_2)\|X^{(k)}-X^*\|_F.
		\end{aligned}
		\]
	\end{proof}
	
	We now state the main convergence result.
	
	\begin{theorem}
		\label{thm:main-convergence}
		Let $A,B,C \in \mathbb{R}^{n\times n}$, where $A$ is nonsingular, and let $X^*$ be the unique solution of \eqref{eq:sylvester-prelim}. Consider the inexact fixed-point iteration
		\[
		X^{(k+1)} = X_{\mathrm{inex}}^{(k+1)}, \qquad
		X_{\mathrm{inex}}^{(k+1)} \approx A^{-1}(C - X^{(k)}B),
		\]
		where the inner approximation satisfies, for some $0<\eta<1$,
		\begin{equation}
			\|R_{\mathrm{in}}^{(k+1)} \|_F \le \eta \| R_{\mathrm{in}}^{(k+1,0)} \|_F,
			\qquad D^{(k)} = C - X^{(k)}B.
			\label{eq:stopping}
		\end{equation}
		Define
		\begin{equation}
			q := \| A^{-1} \|_2 \| B \|_2 + \eta \| A^{-1} \|_2 \bigl( \| A \|_2 + \| B \|_2 \bigr).
			\label{eq:qdef}
		\end{equation}
		If $q<1$, then the iteration converges linearly to $X^*$ in the Frobenius norm:
		\[
		\| X^{(k)} - X^* \|_F \le q^{k} \, \| X^{(0)} - X^* \|_F,
		\qquad k = 0,1,2,\dots
		\]
		In particular, $\displaystyle \lim_{k\to\infty} X^{(k)} = X^*$.
	\end{theorem}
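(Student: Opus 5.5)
The plan is to derive a one-step contraction estimate for the error \(E^{(k)}:=X^{(k)}-X^*\) and then iterate it. The starting point is the error recursion \eqref{eq:4.6}, which expresses \(E^{(k+1)}\) as the sum of the exact fixed-point error term \(-A^{-1}E^{(k)}B\) and the inexactness term \(A^{-1}R_{\mathrm{in}}^{(k+1)}\). I will bound each of these two terms in the Frobenius norm separately and combine them with the triangle inequality.

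For the first term, I will use compatibility of the Frobenius norm with the spectral norm on both sides, \(\|PYQ\|_F\le\|P\|_2\|Y\|_F\|Q\|_2\). This gives
\[
\bigl\|A^{-1}E^{(k)}B\bigr\|_F\le\|A^{-1}\|_2\,\|B\|_2\,\|E^{(k)}\|_F .
\]
For the second term, I will first apply the same compatibility to get
\[
\|A^{-1}R_{\mathrm{in}}^{(k+1)}\|_F\le\|A^{-1}\|_2\,\|R_{\mathrm{in}}^{(k+1)}\|_F .
\]
Next I will invoke the stopping criterion \eqref{eq:stopping} to replace \(\|R_{\mathrm{in}}^{(k+1)}\|_F\) by \(\eta\|R_{\mathrm{in}}^{(k+1,0)}\|_F\). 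Finally, Lemma~\ref{lem:4.1} bounds the initial inner residual by \((\|A\|_2+\|B\|_2)\|E^{(k)}\|_F\). Lemma~\ref{lem:4.1} is valid here because the inner iteration is warm-started with \(X^{(k+1,0)}=X^{(k)}\), as stipulated in this section.

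Adding the two bounds yields \(\|E^{(k+1)}\|_F\le q\,\|E^{(k)}\|_F\), with \(q\) exactly as in \eqref{eq:qdef}. A straightforward induction on \(k\) then gives \(\|E^{(k)}\|_F\le q^k\|E^{(0)}\|_F\). Since \(q<1\), we have \(q^k\to0\), and convergence \(X^{(k)}\to X^*\) follows.

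There is no deep obstacle in this argument. The one point needing care is that the stopping criterion \eqref{eq:stopping} is relative to the \emph{initial} inner residual, so the warm start \(X^{(k+1,0)}=X^{(k)}\) is essential for linking it back to the outer error through Lemma~\ref{lem:4.1}. I will also remark that such an \(\eta\) is attainable with finitely many inner steps. By Theorem~\ref{thm:matrix_inner_convergence}, the inner errors contract by the factor \(\|M\|_2<1\), so the inner residuals \(A(X^{(k+1,\nu)}-X^{(k+1,*)})\) tend to zero. Hence \eqref{eq:stopping} is eventually satisfied whenever \(R_{\mathrm{in}}^{(k+1,0)}\neq0\); if \(R_{\mathrm{in}}^{(k+1,0)}=0\), then \(X^{(k)}\) is already the exact inner solution and the bound holds trivially.
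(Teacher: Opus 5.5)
Your proposal is correct and follows the paper's proof essentially step for step. Both use the error recursion \eqref{eq:4.6}, spectral--Frobenius norm compatibility on each term, the stopping criterion \eqref{eq:stopping} combined with Lemma~\ref{lem:4.1} (made valid by the warm start $X^{(k+1,0)}=X^{(k)}$), and then induction. Your extra remark on attainability of \eqref{eq:stopping} is not needed, since the theorem assumes the criterion; note also that it relies on Theorem~\ref{thm:matrix_inner_convergence} and so implicitly requires $n\ge 2$.
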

	
	\begin{proof}
		From \eqref{eq:4.6}, we have
		\[
		X^{(k+1)} - X^* = -A^{-1} (X^{(k)} - X^*) B + A^{-1} R_{\mathrm{in}}^{(k+1)}.
		\]
		Taking the Frobenius norm and applying the compatibility of the spectral norm,
		\[
		\| X^{(k+1)} - X^* \|_F
		\le \|A^{-1}\|_2 \|B\|_2 \|X^{(k)} - X^*\|_F
		+ \|A^{-1}\|_2 \|R_{\mathrm{in}}^{(k+1)}\|_F.
		\]
		Using the stopping criterion \eqref{eq:stopping} and Lemma~\ref{lem:4.1},
		\[
		\|A X_{\mathrm{inex}}^{(k+1)} - D^{(k)}\|_F
		\le \eta \|A X^{(k)} - D^{(k)}\|_F
		\le \eta \bigl( \|A\|_2 + \|B\|_2 \bigr) \|X^{(k)} - X^*\|_F.
		\]
		Combining both inequalities yields
		\[
		\| X^{(k+1)} - X^* \|_F \le q \| X^{(k)} - X^* \|_F,
		\]
		with $q$ defined in \eqref{eq:qdef}. Since $q<1$, induction gives
		\[
		\| X^{(k)} - X^* \|_F \le q^{k} \| X^{(0)} - X^* \|_F,
		\]
		which proves linear convergence and hence $X^{(k)} - X^* \to 0$.
	\end{proof}
	
	\begin{remark}
		The condition $q<1$ in Theorem~\ref{thm:main-convergence} is sufficient but not necessary; Examples~1 and~3 fail it with $\eta=0.1$ for different reasons. Let
		$M=I-\frac{2}{n}A^{\mathrm T}EA$ and let $\nu_k\ge1$ be the number of inner steps at outer iteration $k$. Since
		\[
		X^{(k+1,\nu_k)}-X^{(k)}=(I_n-M^{\nu_k})(\mathcal{T}(X^{(k)})-X^{(k)})
		\]
		and
		\[
		\mathcal{T}(X^{(k)})-X^\ast=-A^{-1}(X^{(k)}-X^\ast)B,
		\]
		the unaccelerated update satisfies
		\[
		X^{(k+1)}-X^\ast = M^{\nu_k}(X^{(k)}-X^\ast) - (I-M^{\nu_k})A^{-1}(X^{(k)}-X^\ast)B,
		\]
		which preserves the inner-error structure, unlike the estimate in Theorem~\ref{thm:main-convergence}.
		
		In Example~1, $q=(1+2\eta)\kappa_2(A)>1$ for every $\eta>0$. With $X^\ast=I_n$ and $X^{(0)}=0$, the exact outer iteration alternates between $0$ and $2I_n$, i.e., $X^{(k)}-X^\ast=(-1)^{k+1}I_n$. Finite inner iterations may damp this oscillation: if ideally $M^{\nu_k}=\theta I_n$ with $0<\theta<1$, scalar errors obey
		\[
		X^{(k+1)}-X^\ast=(2\theta-1)(X^{(k)}-X^\ast)
		\]
		with $|2\theta-1|<1$.
		
		The stopping rule, however, implies neither a scalar outer factor nor $M^{\nu_k}\approx\varepsilon_{\mathrm{in}}I_n$, and as the inner accuracy increases, $M^{\nu_k}\to0$ and the update approaches the nonconvergent exact iteration. Moreover, nonscalar errors may excite unstable directions: for real eigenvalues $\lambda_i,\lambda_j$ of $A$ with right and left eigenvectors $u_i,y_j$, i.e., $Au_i=\lambda_i u_i$ and $y_j^{\mathrm{T}}A=\lambda_j y_j^{\mathrm{T}}$, the exact update acts on $u_i y_j^{\mathrm{T}}$ as multiplication by $-\lambda_j/\lambda_i$, amplified whenever $|\lambda_j|>|\lambda_i|$. These observations explain the tolerance sensitivity of Example~1 qualitatively only.
		
		In Example~3, by contrast, the exact outer map is contractive. Let $s = 2^{-10}$. Since
		\[
		\|B\|_2 \le s + (r + s)(n - 1) < 0.108
		\]
		and $\|A^{-1}\|_2 < 1.008$, we have
		\[
		\|A^{-1}\|_2\|B\|_2 < 0.109 < 1.
		\]
		Hence Theorem~\ref{thm:fixed-point-convergence} applies to the exact fixed-point iteration. The sufficient condition in Theorem~\ref{thm:main-convergence} fails for $\eta = 0.1$ mainly because of the residual bound involving $\|A\|_2$. Indeed, $Ae_1 = e_1$ implies $\|A^{-1}\|_2 \ge 1$, while
		\[
		Ae_n = ne_n + r\sum_{i=1}^{n-1} e_i
		\]
		implies $\|A\|_2 \ge n = 100,$ where $e_i$ denotes the $i$-th standard basis vector of $\mathbb{R}^n$. Therefore
		\[
		\eta\|A^{-1}\|_2\|A\|_2 \ge 10
		\]
		when $\eta = 0.1$. On the other hand, using the estimates $\|A\|_2 \le n + r(n - 1) < 100.01$, $\|A^{-1}\|_2 < 1.008$, and $\|B\|_2 < 0.108$, we obtain
		\[
		q < 0.109 + 100.92\eta,
		\]
		so Theorem~\ref{thm:main-convergence} applies only for approximately $\eta \lesssim 0.0088$.
		
		In summary, Example~1 illustrates that, for certain initial errors and inner stopping rules, finite inner iterations may have a relaxation effect and may stabilize an otherwise nonconvergent exact outer update, while Example~3 shows the conservativeness of the inner-error bound in Theorem~\ref{thm:main-convergence}. Both concern the unaccelerated method; the Anderson-accelerated results remain numerical evidence rather than consequences of Theorem~\ref{thm:main-convergence}.
	\end{remark}
	
	The complete unaccelerated reflection method is summarized in Algorithm~\ref{alg:reflection}.
	
	\begin{algorithm}[H]
		\caption{The Reflection Method for $AX+XB=C$}
		\label{alg:reflection}
		\begin{algorithmic}[1]
			\State \textbf{Input:} $A,B,C\in\mathbb{R}^{n\times n}$, initial matrix $X^{(0)}$, maximum outer iteration number $k_{\max}$, maximum inner iteration number $\nu_{\max}$, outer tolerance $\varepsilon$, and inner tolerance $\varepsilon_{\mathrm{in}}$
			\State Let $\boldsymbol{a}_i^{\mathsf T}$ denote the $i$-th row of $A$
			\State Compute $E=\operatorname{diag}\left(\|\boldsymbol{a}_1\|_2^{-2},\|\boldsymbol{a}_2\|_2^{-2},\ldots,\|\boldsymbol{a}_n\|_2^{-2}\right).$
			\State Compute $R_{\text{Syl}}^{(0)} = C - A X^{(0)} - X^{(0)} B$.
			\If{$\|R_{\text{Syl}}^{(0)}\|_F = 0$}
			\State \Return $X^{(0)}$
			\EndIf
			\For{$k=0,1,\ldots,k_{\max}-1$}
			\State Compute the right-hand side of the inner problem $D^{(k)}=C-X^{(k)}B$
			\State Set the initial inner iterate $X^{(k+1,0)}=X^{(k)}$
			\State Compute the initial inner residual $R_{\mathrm{in}}^{(k+1,0)}=AX^{(k+1,0)}-D^{(k)}$
			\State Set $\nu_{actual} = \nu_{\max}.$
			\For{$\nu=0,1,\ldots,\nu_{\max}-1$}
			\State Perform one matrix reflection iteration
			\[
			X^{(k+1,\nu+1)} = X^{(k+1,\nu)} +\frac{2}{n}A^{\mathsf T}E \left(D^{(k)}-AX^{(k+1,\nu)}\right)
			\]
			\State Compute the new inner residual $R_{\mathrm{in}}^{(k+1,\nu+1)} = AX^{(k+1,\nu+1)}-D^{(k)}$
			\If{$\|R_{\rm in}^{(k+1,\nu+1)}\|_F \leq \varepsilon_{\rm in} \|R_{\rm in}^{(k+1,0)}\|_F$}
			\State Accept the inner iterate $\nu_{actual} = \nu + 1$.
			\State \textbf{break}
			\EndIf
			\EndFor
			\State Set the inner solution $X_{\mathrm{inex}}^{(k+1)} = X^{(k+1,\nu_{actual})}$
			\State Perform the outer update $X^{(k+1)} = X_{\mathrm{inex}}^{(k+1)}$
			\State Compute the Sylvester residual $R_{\mathrm{Syl}}^{(k+1)} = C-AX^{(k+1)}-X^{(k+1)}B$
			\If{$\left\|R_{\mathrm{Syl}}^{(k+1)}\right\|_{F} \leq \varepsilon\left\|R_{\mathrm{Syl}}^{(0)}\right\|_{F}$}
			\State \Return $X^{(k+1)}$
			\EndIf
			\EndFor
			\State \Return $X^{(k_{\max})}$
		\end{algorithmic}
	\end{algorithm}
	
	\section{Anderson-Accelerated Reflection Method}
	\label{sec:anderson}
	In this section, we apply Anderson acceleration \cite{WalkerNi2011} to the inexact fixed-point iteration in order to improve the practical convergence speed of the outer iteration. According to Section~\ref{sec:convergence}, the basic reflection scheme is an inexact fixed-point iteration.
	
	With the inner initialization and stopping rule of Algorithm~\ref{alg:reflection}, define the approximate fixed-point mapping
	\[
	\widetilde{\mathcal{T}}(X^{(k)}) := X_{\mathrm{inex}}^{(k+1)},
	\]
	where $X^{(k+1)}_{\mathrm{inex}}$ is obtained after a finite number of inner reflection iterations for solving the matrix equation \eqref{eq:4.1}, that is, an approximate solution of \eqref{eq:4.1}.
	
	Thus, $\widetilde{\mathcal{T}}$ is a practical approximation of the exact map $\mathcal{T}(X) = A^{-1}(C - XB)$.
	
	To accelerate the outer iteration, we apply Anderson acceleration to
	\[
	X^{(k+1)} = \widetilde{\mathcal{T}}(X^{(k)}).
	\]
	
	Given the initial matrix $X^{(0)}$, define the relative Sylvester residual of any iterate $X$ by
	\[
	\operatorname{Res}(X) = \frac{\|C-AX-XB\|_F}{\|R_{\mathrm{Syl}}^{(0)}\|_F},
	\]
	where $R_{\mathrm{Syl}}^{(0)}=C-AX^{(0)}-X^{(0)}B.$
	
	Define the fixed-point residual by
	\[
	G^{(k)} = \widetilde{\mathcal{T}}(X^{(k)}) - X^{(k)} = X_{\mathrm{inex}}^{(k+1)} - X^{(k)}.
	\]
	
	Let \(m \ge 0\) be the Anderson window size and set
	\[
	m_k = \min\{m,k\}.
	\]
	At the \(k\)-th outer iteration, the coefficients \(\alpha_0^{(k)}, \alpha_1^{(k)}, \ldots, \alpha_{m_k}^{(k)}\) are determined by solving the constrained least-squares problem
	\[
	\min_{\alpha_0,\alpha_1,\ldots,\alpha_{m_k}} \left\| \sum_{i=0}^{m_k} \alpha_i G^{(k-m_k+i)} \right\|_F,
	\qquad \text{s.t.} \quad \sum_{i=0}^{m_k} \alpha_i = 1.
	\]
	
	The Anderson candidate is then defined by
	\[
	X_{\mathrm{AA}}^{(k+1)} = \sum_{i=0}^{m_k} \alpha_i^{(k)} \widetilde{\mathcal{T}}(X^{(k-m_k+i)}).
	\]
	
	In the implementation, the pairs $\{G^{(i)}, \widetilde{T}(X^{(i)})\}$ are stored for the most recent $m + 1$ accepted outer iterates.
	
	For numerical stability, we use a safeguarded strategy. The safeguard rejects Anderson candidates whose Sylvester residual is larger than that of the standard inexact reflection candidate $X^{(k+1)}_{\mathrm{inex}}$, i.e.,
	\[
	\operatorname{Res}\bigl(X_{\mathrm{AA}}^{(k+1)}\bigr) \le \operatorname{Res}\bigl(X_{\mathrm{inex}}^{(k+1)}\bigr).
	\]
	If this condition holds, we set
	\[
	X^{(k+1)} = X_{\mathrm{AA}}^{(k+1)};
	\]
	otherwise, we reject the Anderson candidate and set
	\[
	X^{(k+1)} = X_{\mathrm{inex}}^{(k+1)}.
	\]
	
	When $m = 0$, the constrained least-squares problem gives $\alpha_0 = 1$, so $X_{\text{AA}}^{(k+1)} = X_{\text{inex}}^{(k+1)}$. Hence Algorithm~\ref{alg:AA-reflection} reduces to the unaccelerated reflection method.
		\begin{algorithm}[!ht]
		\small
		\caption{Safeguarded Anderson-accelerated reflection method for $AX+XB=C$}
		\label{alg:AA-reflection}
		\begin{algorithmic}[1]
			\State \textbf{Input:} initial matrix $X^{(0)}$, Anderson window size $m$, maximum outer iteration number $k_{\max}$, maximum inner iteration number $\nu_{\max}$, tolerances $\varepsilon$ and $\varepsilon_{\rm in}$.
			\State Let $\boldsymbol{a}_i^{\mathsf T}$ denote the $i$-th row of $A$
			\State $E=\operatorname{diag}\left(\|\boldsymbol{a}_1\|_2^{-2},\|\boldsymbol{a}_2\|_2^{-2},\ldots,\|\boldsymbol{a}_n\|_2^{-2}\right).$
			\State Compute $R_{\text{Syl}}^{(0)} = C - A X^{(0)} - X^{(0)} B$.
			\If{$\|R_{\text{Syl}}^{(0)}\|_F = 0$}
			\State \Return $X^{(0)}$
			\EndIf
			\For{$k=0,1,\ldots,k_{\max}-1$}
			\State $D^{(k)}=C-X^{(k)}B.$
			\State $X^{(k+1,0)}=X^{(k)}.$
			\State $R_{\rm in}^{(k+1,0)}=AX^{(k+1,0)}-D^{(k)}.$
			\State Set $\nu_{actual} = \nu_{\max}.$
			\For{$\nu=0,1,\ldots,\nu_{\max}-1$}
			\State Perform one inner reflection iteration
			\[
			X^{(k+1,\nu+1)} = X^{(k+1,\nu)} + \frac{2}{n}A^{\mathsf T}E \left( D^{(k)}-AX^{(k+1,\nu)} \right).
			\]
			\State $R_{\rm in}^{(k+1,\nu+1)} = AX^{(k+1,\nu+1)}-D^{(k)}.$
			\If{$\|R_{\rm in}^{(k+1,\nu+1)}\|_F \leq \varepsilon_{\rm in} \|R_{\rm in}^{(k+1,0)}\|_F$}
			\State Accept the inner iterate $\nu_{actual} = \nu + 1$.
			\State \textbf{break}
			\EndIf
			\EndFor
			\State $X_{\rm inex}^{(k+1)} = X^{(k+1,\nu_{actual})} = \widetilde{\mathcal{T}}\left(X^{(k)}\right).$
			\State $G^{(k)} = \widetilde{\mathcal{T}}\left(X^{(k)}\right)-X^{(k)} = X_{\rm inex}^{(k+1)}-X^{(k)}.$
			\State Set $m_k=\min\{m,k\}.$
			\State Compute the coefficients $\alpha_0^{(k)},\alpha_1^{(k)},\ldots,\alpha_{m_k}^{(k)}$ by solving the constrained least-squares problem
			\[
			\min_{\alpha_0,\alpha_1,\ldots,\alpha_{m_k}} \left\| \sum_{i=0}^{m_k} \alpha_i G^{(k-m_k+i)} \right\|_F, \qquad \text{s.t.} \quad \sum_{i=0}^{m_k}\alpha_i=1.
			\]
			\State Compute the Anderson candidate
			\[
			X_{\rm AA}^{(k+1)} = \sum_{i=0}^{m_k} \alpha_i^{(k)} \widetilde{\mathcal{T}}\left(X^{(k-m_k+i)}\right).
			\]
			\If{$\operatorname{Res}\left(X_{\rm AA}^{(k+1)}\right) \leq \operatorname{Res}\left(X_{\rm inex}^{(k+1)}\right)$}
			\State $X^{(k+1)}=X_{\rm AA}^{(k+1)}.$
			\Else
			\State $X^{(k+1)}=X_{\rm inex}^{(k+1)}.$
			\EndIf
			\State $R_{\mathrm{Syl}}^{(k+1)} = C-AX^{(k+1)}-X^{(k+1)}B.$
			\If{$\|R_{\mathrm{Syl}}^{(k+1)}\|_F \leq \varepsilon \|R_{\mathrm{Syl}}^{(0)}\|_F$}
			\State \Return $X^{(k+1)}$
			\EndIf
			\EndFor
			\State \Return $X^{(k_{\max})}$
		\end{algorithmic}
	\end{algorithm}
	
	\section{Numerical Experiments}
	
	In this section, several numerical experiments are presented to illustrate the effectiveness of the proposed reflection method and the Anderson-accelerated reflection method for solving the Sylvester matrix equation
	\[
	AX+XB=C.
	\]
	
	All experiments are performed in double precision using MATLAB R2024b. The initial guess was chosen as the zero matrix $X^{(0)}=0$. In all experiments, the exact solution was set to $X^*=I_n$, and the right-hand side was defined by $C=AX^*+X^*B$. The outer iteration is terminated when
	\[
	\frac{\|R_{\mathrm{Syl}}^{(k+1)}\|_F}{\|R_{\mathrm{Syl}}^{(0)}\|_F} \le 10^{-6},
	\]
	where $R_{\mathrm{Syl}}^{(k)} = C - A X^{(k)} - X^{(k)} B$ is the residual of the $k$-th iterate. Unless otherwise stated, the inner stopping criterion is set to
	\[
	\frac{\|R_\text{in}^{(k+1,\nu+1)}\|_F }{\|R_\text{in}^{(k+1,0)}\|_F} \le 0.1,
	\]
	where $R_{\mathrm{in}}^{(k+1,\nu+1)} = A X^{(k+1,\nu+1)} - D^{(k)}.$ In Table~\ref{tab:iteration_data}, $\varepsilon_\text{in}$ is varied while the outer tolerance remains fixed at $10^{-6}$.
	
	In the following tables, ``Out-Itr'' denotes the number of iterations or the number of total outer iteration steps, ``Inner-Itr'' denotes the total number of inner reflection iterations, ``Res-Norm'' denotes the relative residual norm, and ``CPU Time'' denotes the computational time in seconds. We use the following notation to characterize the termination status of the experiments, ``$-$'' means that the method failed to satisfy the stopping criterion within the prescribed maximum number of outer iteration steps; in the reported cases, the residual either stagnated or increased rapidly.
	
	We mainly compare the Safeguarded Anderson-accelerated reflection method (SAAR($m$)) with the reflection method, the restarted minimal projected residual algorithm (RMPR($\tilde{m}$)) \cite{Hu1992}, the Global FOM-Sylvester-Like (GFSL($\tilde{m}$)) \cite{SalkuyehToutounian2006}, and the gradient iteration (GI) \cite{DingLiuDing2008}. RMPR($\tilde{m}$) and GFSL($\tilde{m}$) are based on Krylov subspace methods, and $\tilde{m}$ is the dimensions of Krylov subspaces and also denotes their restart parameter.
	
	\noindent\textbf{Example 1.} For this example, we use the matrices
	\[
	A = B = M + 2rN + \frac{100}{(n+1)^2}I,
	\]
	where $M, N \in \mathbb{R}^{n\times n}$ are the tridiagonal matrices given by
	\[
	M = \text{tridiag}(-1, 4, -1) \quad \text{and} \quad N = \text{tridiag}(0.5, 0, -0.5)
	\]
	and the parameter is set to $r=0.01$. We test dimensions $n = 32, 64, 128, 256$.
	
	We test the Anderson window sizes
	\[
	m=0,1,2,3,4,5,
	\]
	in particular, when \(m=0\), the method reduces to the standard reflection method without Anderson acceleration.
	
	The results for different Anderson window sizes and dimensions are reported in Table~\ref{tab:example1}.
	
	\begin{table}[H]
		\centering
		\caption{Number of iterations (Out-Itr, Inner-Itr) and CPU time (seconds) for different window sizes $m$ and dimensions $n$}
		\label{tab:example1}
		\begin{tabular}{c *{3}{S[table-format=4.0] S[table-format=4.0] S[table-format=3.3]}}
			\toprule
			\multirow{2}{*}{$n$} & \multicolumn{3}{c}{$m=0$} & \multicolumn{3}{c}{$m=1$} & \multicolumn{3}{c}{$m=2$} \\
			\cmidrule(lr){2-4} \cmidrule(lr){5-7} \cmidrule(lr){8-10}
			& {Out-Itr} & {Inner-Itr} & {CPU Time} & {Out-Itr} & {Inner-Itr} & {CPU Time} & {Out-Itr} & {Inner-Itr} & {CPU Time} \\
			\midrule
			32  & 68  & 1325 & 0.034 & 18 & 1183 & 0.047 & 12 & 969 & 0.025 \\
			64  & 68  & 2698 & 0.174 & 16 & 2042  & 0.128 & 12 & 1986  & 0.123 \\
			128 & 66  & 5261 &  1.760 & 16 & 4091 & 1.346 & 12 & 4064 & 1.264 \\
			256 & 65 & 10399 &  13.857 & 16& 8158 & 10.769 & 12 & 8160 & 11.245 \\
			\bottomrule
		\end{tabular}
	\end{table}
	
	\begin{table}[H]
		\centering
		\caption*{Table \ref{tab:example1} (continued)}
		\begin{tabular}{c *{3}{S[table-format=4.0] S[table-format=4.0] S[table-format=3.3]}}
			\toprule
			\multirow{2}{*}{$n$} & \multicolumn{3}{c}{$m=3$} & \multicolumn{3}{c}{$m=4$} & \multicolumn{3}{c}{$m=5$} \\
			\cmidrule(lr){2-4} \cmidrule(lr){5-7} \cmidrule(lr){8-10}
			& {Out-Itr} & {Inner-Itr} & {CPU Time} & {Out-Itr} & {Inner-Itr} & {CPU Time} & {Out-Itr} & {Inner-Itr} & {CPU Time} \\
			\midrule
			32  & 13  & 910 & 0.018 & 12 & 840 & 0.022 &11 & 815 & 0.016\\
			64  & 13  & 1809 & 0.112 & 12 & 1693 & 0.107 & 11 & 1642 & 0.114 \\
			128 & 11  & 3008  & 0.946 & 10 & 2807 & 0.904 &11 & 3275 &  1.011\\
			256 & 11 & 5995 & 7.679 &10&  5619  & 6.660 & 11 &  6450 & 7.903\\
			\bottomrule
		\end{tabular}
	\end{table}
	
	Table~\ref{tab:example1} shows that the Anderson-accelerated variants require substantially fewer outer iterations than the reflection method for all tested dimensions. In particular, the number of outer iterations is reduced from 65--68 for the unaccelerated method to 10--18 for the accelerated variants. The reduction in the total number of inner iterations is also evident. Although the CPU time depends on both the number of inner iterations and the cost of the Anderson least-squares step, the accelerated schemes generally provide a favorable computational performance. These results demonstrate the practical effectiveness of the safeguarded Anderson acceleration for the selected test problem.
	
	\noindent\textbf{Influence of the inner tolerance.} We next investigate the influence of the inner tolerance $\varepsilon_\text{in}$ for Example~1 with $A = B$ and $n = 32$. The maximum number of outer iterations was set to 200. Table~\ref{tab:iteration_data} reports the outer iteration counts and CPU times, while Figure~\ref{fig:residual_combined} shows the corresponding relative Sylvester residual histories for $m = 0,1,\dots,5$.
	
	The unaccelerated method is highly sensitive to the inner tolerance. It converges for $\varepsilon_\text{in} = 0.1$, but fails to satisfy the stopping criterion for smaller tolerances. This behavior is consistent with the discussion in Remark~2: when the inner solve becomes more accurate, the outer iteration approaches the exact fixed-point iteration, which is nonconvergent for $A = B$ and $X^{(0)} = 0$.
	
	For the Anderson-accelerated variants, convergence is much more robust. In particular, for $m \ge 2$, all tested inner tolerances lead to convergence within a small number of outer iterations. Nevertheless, the behavior is not completely monotone with respect to $m$ or $\varepsilon_\text{in}$; for example, the case $m = 1$ and $\varepsilon_\text{in} = 10^{-2}$ fails. These observations provide numerical evidence that safeguarded Anderson acceleration improves practical robustness, although they do not constitute a general convergence proof.
	
	\begin{table}[H]
		\centering
		\renewcommand{\arraystretch}{1.2}
		\caption{Outer iterations and CPU times for different inner tolerances $\varepsilon_\text{in}$ and Anderson window sizes $m$ in the case $A = B$ with $n=32$.}
		\label{tab:iteration_data}
		\vspace{0.2cm}
		\begin{tabular}{c c c c c c c c c}
			\toprule
			\multirow{2}{*}{\(m\)} & \multicolumn{2}{c}{$\varepsilon_\text{in}=0.1$} & \multicolumn{2}{c}{$\varepsilon_\text{in}=0.01$} & \multicolumn{2}{c}{$\varepsilon_\text{in}=0.001$} & \multicolumn{2}{c}{$\varepsilon_\text{in}=1\times 10^{-6}$} \\
			\cmidrule(lr){2-3} \cmidrule(lr){4-5} \cmidrule(lr){6-7} \cmidrule(lr){8-9}
			& Out-Itr & CPU Time  & Out-Itr & CPU Time  & Out-Itr & CPU Time & Out-Itr & CPU Time  \\
			\midrule
			0 & 68 & 0.034  & $-$  & $-$  & $-$  & $-$  & $-$  & $-$  \\
			1 & 18  & 0.047 & $-$  & $-$  & 8  & 0.036  & 4  & 0.042 \\
			2 & 12  & 0.025 & 13  & 0.045& 7  & 0.036& 3   & 0.041 \\
			3 & 13  & 0.018& 11  & 0.030 & 6  & 0.025 & 3   & 0.027 \\
			4 & 12  & 0.022& 9  & 0.025 & 5  & 0.021 & 3   & 0.028 \\
			5 & 11  & 0.016& 7  & 0.022& 5  & 0.022& 3   & 0.026 \\
			\bottomrule
		\end{tabular}
	\end{table}
	
	\begin{figure}[H]
		\centering
		\includegraphics[width=0.95\textwidth]{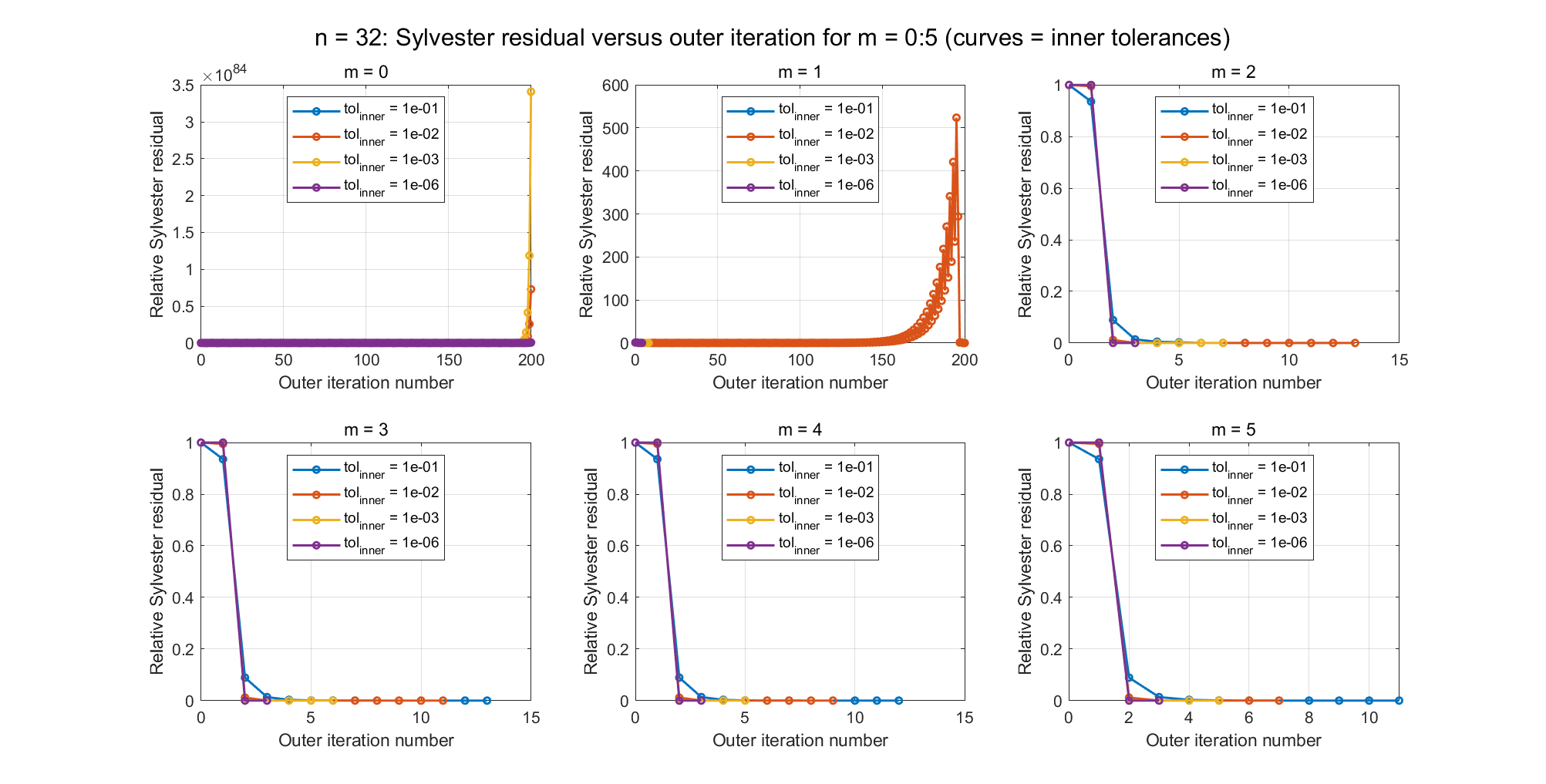}
		\caption{Relative Sylvester residual versus outer iteration number for $n = 32, m = 0, 1,\ldots, 5$, and different inner tolerances.}
		\label{fig:residual_combined}
	\end{figure}
	
	\noindent\textbf{Example 2.} For the second example, we consider
	\[
	A=
	\begin{pmatrix}
		\frac{\sqrt{3}}{2} & -\frac{1}{2} & 0 \\
		\frac{1}{2} & \frac{\sqrt{3}}{2} & 0 \\
		0 & 0 & 1
	\end{pmatrix},
	\]
	and
	\[
	B=\text{tridiag}(0.001, 0, 0.001).
	\]
	We consider the case $n=3.$
	
	We apply the iteration methods to this problem, and the results are given in Table~\ref{tab:example2}.
	
	\begin{table}[H]
		\centering
		\caption{Numerical experiments for Example 2 with $n=3$.}
		\label{tab:example2}
		\begin{tabular}{cccc}
			\toprule
			Method & Out-Itr & CPU Time & Res-Norm \\
			\midrule
			reflection method  & 5  & 0.047 & $6.284\times 10^{-7}$ \\
			RMPR(3) & 1 & 0.156 & $2.184\times 10^{-16}$ \\
			GFSL(3)  & 3 & 0.063 & $2.309\times 10^{-9}$ \\
			GI & 7 & 0.031 & $1.016\times10^{-7}$ \\
			SAAR(3) & 3 & 0.023 & $5.205\times 10^{-8}$ \\
			\bottomrule
		\end{tabular}
	\end{table}
	
	From Table~\ref{tab:example2}, for this small MATLAB test, SAAR(3) requires fewer iterations than the standard reflection method and GI, and also gives the shortest recorded CPU time among the tested methods. RMPR(3) converges in one outer iteration and attains the smallest residual, but its CPU time is higher for this small problem.
	
	\noindent\textbf{Example 3.} In the third example, we consider the coefficient matrices
	\[
	\begin{cases}
		A = \operatorname{diag}(1, 2, \dots, n) + rL^T,\\
		B = 2^{-t}I_n + rL^T + 2^{-t}L,
	\end{cases}
	\]
	with $n=100$, where $L$ is the strictly lower triangular matrix whose entries below the main diagonal are all equal to one \cite{Bai2011}. Here, we set parameters $t = 10$ and $r = 0.0001$.
	
	Table~\ref{tab:example3} presents the performance of different methods for this problem.
	
	\begin{table}[H]
		\centering
		\caption{Numerical experiments for Example 3 with $n=100$.}
		\label{tab:example3}
		\begin{tabular}{cccc}
			\toprule
			Method  & Out-Itr & CPU Time & Res-Norm \\
			\midrule
			reflection method  & 6 & 0.552 & $9.961\times 10^{-7}$\\
			RMPR(5) & $-$ & $-$ & $-$\\
			GFSL(5)  & 75215 & 184.550 & $9.999\times 10^{-7}$ \\
			GI & 75118 & 185.819 &  $9.999\times 10^{-7}$\\
			SAAR(5)   & 3 & 0.269 & $2.228\times 10^{-7}$ \\
			\bottomrule
		\end{tabular}
	\end{table}
	From Table~\ref{tab:example3}, the reflection method and SAAR(5) converge in 6 and 3 outer iterations, respectively. SAAR(5) requires about half the CPU time of the reflection method and attains the smallest residual among the tested methods. GFSL(5) and GI require about $7.5\times 10^4$ iterations under the chosen parameters, indicating very slow convergence for this problem. RMPR(5) does not satisfy the stopping criterion within the prescribed iteration limit; this may be related to the small projection dimension and the absence of an evident low-rank structure in the solution. Overall, for this test problem and this implementation, SAAR(5) is the most efficient among the compared methods.
	
	\section{Conclusions}
	
	In this paper, we proposed a reflection-based two-level iterative method for the continuous Sylvester equation
	\[
	AX+XB=C.
	\]
	The equation was first reformulated as a fixed-point iteration. At each outer iteration, the resulting linear matrix equation was decomposed into independent column-wise linear systems, and these systems were solved approximately by the reflection method. This gives a practical nested iteration in which the fixed-point iteration acts as the outer iteration and the reflection method serves as the inner solver.
	
	For the inner reflection iteration, we derived its compact matrix form and proved, by using a spectral-radius argument, that it converges to the exact solution of the corresponding linear system when \(A\) is nonsingular and \(n\ge 2\). By stacking the column-wise iterations, this convergence result was extended to the inner matrix equation. Since the inner problems are solved only approximately in practical computations, the whole procedure was further interpreted as an inexact fixed-point iteration. Under a residual-based inner stopping condition, we established a sufficient condition for linear convergence of the unaccelerated inexact outer iteration in the Frobenius norm.
	
	To improve the practical convergence behavior, we incorporated Anderson acceleration into the outer iteration and used a residual-based safeguard. The safeguard rejects Anderson candidates whose Sylvester residual is larger than that of the standard inexact reflection candidate. The numerical experiments show that the safeguarded Anderson-accelerated reflection method can significantly reduce the number of outer iterations and often decrease the total CPU time in the reported MATLAB tests. In particular, the tolerance experiments demonstrate that the unaccelerated method may be sensitive to the inner tolerance in borderline cases, while the Anderson-accelerated variants are generally more robust for the tested problems.
	
	It should be emphasized that the convergence theorem in this paper applies only to the unaccelerated inexact fixed-point iteration under the condition \(q<1\). The convergence of the safeguarded Anderson-accelerated scheme is supported by numerical evidence, but a complete theoretical convergence analysis remains open. Moreover, the experiments and the discussion in Remark~2 show that the practical behavior of the method depends on the interaction among the inner stopping rule, the inexact fixed-point map, and the Anderson extrapolation. In particular, convergence observed outside the sufficient condition of Theorem~\ref{thm:main-convergence} should not be interpreted as a consequence of that theorem.
	
	Future work includes developing convergence conditions for the safeguarded Anderson-accelerated scheme, improving the treatment of rank-deficient or ill-conditioned Anderson least-squares problems, and extending the proposed framework to large-scale sparse problems, low-rank formulations, generalized Sylvester equations, and tensor matrix equations.

\end{document}